\newtheorem{thrm}{Theorem}[section]
\newtheorem{prop}{Proposition}[section]
\newtheorem{lem}{Lemma}[section]
\newcommand{\ind}{\mathbbm{1}}
\newcommand{\ball}{B_{\rho}}
\newcommand{\diam}{\mbox{diam }}
\newcommand{\var}{\mbox{var}}
\newcommand{\dist}{\mbox{dist}}
\providecommand{\abs}[1]{\lvert \, #1 \, \rvert}
\providecommand{\norm}[1]{\lVert \, #1 \, \rVert}
\providecommand{\floor}[1]{\left\lfloor \, #1 \, \right\rfloor}
\providecommand{\roof}[1]{\left\lceil \, #1 \, \right\rceil}
\newcommand{\bea}[1]{\begin{eqnarray}\label{#1}}
\newcommand{\eea}{\end{eqnarray}}
\title[Exponential Law for Random Maps]{Exponential Law for Random  Maps  on Compact Manifolds}
 \date{\today}
 \thanks{This work was partially supported by FAPESB, CNPq, CAPES, FCT project PTDC/MAT-PUR/28177/2017, with national funds, and by CMUP (UID/MAT/00144/2019), which is funded by FCT with national (MCTES) and European structural funds through the programs FEDER, under the partnership agreement PT2020.}
\begin{document}
 \maketitle
\authors{Nicolai T A Haydn\footnote{Department of Mathematics, University of Southern California,
Los Angeles, 90089-2532. E-mail: {\tt \email{nhaydn@usc.edu}}.},
J\'er\^ome Rousseau\footnote{Departamento de Matematica, Universidade Federal da Bahia, Avenida Ademar de Barros s/n, 40170-110 Salvador, BA, Brasil. E-mail: {\tt \email{jerome.rousseau@ufba.br}}}\footnote{Departamento de Matem\'atica, Faculdade de Ci\^encias da Universidade do Porto,Rua do Campo Alegre, 687, 4169-007 Porto, Portugal},
Fan Yang\footnote{Department of Mathematics, University of Oklahoma,
Norman, OK 73019-3103. E-mail: {\tt \email{fan.yang-2@ou.edu}}.}}


\begin{abstract}
We consider random dynamical systems on manifolds modeled by a skew product which have certain geometric properties and whose measures satisfy quenched decay of correlations at a sufficient rate. We prove that the limiting distribution for the hitting and return times to geometric balls are both exponential for almost every realisation. We then apply this result to random $C^2$ maps of the interval, random parabolic maps on the unit interval, random perturbation of partially hyperbolic attractors on a compact Riemannian manifold and random perturbation of non-uniformly expanding maps with critical set.
\end{abstract}

\tableofcontents

\section{Introduction}
As a generalisation of deterministic dynamical systems but also as a better approximation of natural phenomena (e.g. existence of smalls perturbations), random dynamical systems have been extensively study in the last few decades. Unlike deterministic systems which only consider the iteration of one map, random systems allow the composition of different maps (for example by adding the presence of random noise or random perturbations) thus increasing the difficulty to analyze their statistical properties, in particular since these maps generally do not share a common invariant measure. We point the readers to the review paper by Kifer and Liu~\cite{KL06} for more details.

Among these statistical properties, we would like to focus on limit laws for rare events, more precisely Hitting Time Statistics (HTS) and Return Time Statistics (RTS). The study of hitting/return times for deterministic systems traces all the way back to the famous work of Poincar\'e~\cite{P99} who proved that the orbit of almost every point comes back as close as you want to its starting point and our main subject of interest will be the time needed for the orbit to come back. More precisely, if we denote by $\tau_A(x)$ the first time that the orbit of $x$ enters the set $A$,
then one can consider the functions
$$
F^h_A(t) = \mu\!\left(x\in X: \tau_{A}(x)>\frac{t}{\mu(A)}\right)
$$
and
$$
F^r_A(t) = \frac{1}{\mu(A)}\mu\!\left(x\in A: \tau_{A}(x)>\frac{t}{\mu(A)}\right)
$$
where $\mu$ is an invariant measure of the transformation. We can observe that the scaling factor $\frac{1}{\mu(A)}$ is suggested by the Kac's Lemma~\cite{K47} which says that $\int_A\tau_A\,d\mu=1$.

One is naturally interested whether the functions $F^h_{A_n}$ (respectively $F^r_{A_n}$) converge to a limiting function $F^h$ (respectively $F^r$)
as $n\to\infty$ when one chooses a sequence of nested sets $\{A_n\}_{n=1}^\infty$.
Indeed, if the sets $A_n$ are taken to be cylinder 
sets with respect to a generating measurable partition, then the limit is known to be exponential 
for non-periodic points for mixing measures (see e.g.~\cite{GS97,Ab2}). In the case of Bowen 
balls the same result is known to be true~\cite{HY}. For geometric balls $B_r(y)$ it has been proven that the limit $F$ is exponential if $\mu$ has exponential decay of correlations (e.g. \cite{Rou14} and references therein). We refer to the reviews \cite{H13, Sau09} for more details on this subject. 

Our goal in this paper is to extend some of these results for deterministic dynamical systems to the realm of random dynamical systems. 
We consider a family of maps $\{T_\omega\}_\omega$ with $T_\omega:X\rightarrow X$. The randomness comes from a dynamical system $(\Omega, \theta, \nu)$ and the random orbit is given by $$T_\omega^n(x)=T_{\theta^{n}\omega}\circ T_{\theta^{n-1}\omega}\circ\dots\circ T_\omega(x).$$
Thus, one can define the (quenched) hitting time $\tau^\omega_A(x)$ as the first time the random orbit of $x$ enters the 
set $A$. 
There are two ways to define the hitting times distribution, namely 
$$
F(t) = \mathbb{P}\!\left((\omega,x): \tau^\omega_{A}(x)>\frac{t}{\mu(A)}\right)
$$
and
$$
F^\omega(t) = \mu^\omega\!\left(x: \tau^\omega_{A}(x)>\frac{t}{\mu(A)}\right).
$$
The first is known as the annealed distribution, where the probability is taken with respect to the measure $\mathbb{P}$, which is invariant for the random dynamical system, i.e. invariant for the associated skew-product. The second is called the quenched distribution where the probability is taken
with the measure $\mu^\omega$ associated with the `realisation' $\omega$. In both cases, the scaling factor is $\frac{1}{\mu(A)}$, where $\mu$ is the marginal measure and is suggested by Kac's Lemma for the associated skew-product \cite{MarieR,RSV}. 

In \cite{AFV,AA, Rou14}, it is proven that the annealed distribution for geometric balls 
converges to an exponential for maps with (annealed) exponential decay of correlations. 
In the first two papers, their method exploits the relation between the hitting times statistics and the extreme 
value distribution, while in the third one the method of \cite{HSV} is followed. In these paper, the convergence of the return time distribution to an exponential is also proven.

On the other hand, a quenched result is more interesting since it easily implies the annealed 
result by integrating over $\omega$, but more difficult to get. The only known results are~\cite{RSV,RT,FFV17} 
where random subshifts of finite type with fast decay of correlations are considered and an exponential law is proved for hitting times. 

We emphasize that in these articles studying the quenched case, they did not prove the distribution for the return times and more importantly one can notice that the convergence of the return time distribution does not come immediately from the convergence of the hitting time distribution, as one could have hoped for from the deterministic case (e.g. \cite{HLV,Sau09}) or the annealed case \cite{Rou14}.

In this paper, we extend the results of \cite{RSV} to maps and prove that the quenched hitting time statistics converges, for almost every $\omega$, to the exponential distribution for random maps which have certain geometric properties and with some rapidly mixing conditions. Moreover, this is the first paper where one also managed to obtain the convergence of the quenched return time statistics to the exponential distribution. The main theorems are stated in Section~\ref{random.maps},
and proven in Sections~\ref{very.short.returns} to \ref{return.times.distribution}. The proof is based on the deterministic case~\cite{HY17} which
in its turn was derived from the deterministic case on Young towers~\cite{HW14} but we emphasize that this is not just a simple adaptation of the deterministic case to the random case. In particular, one needs to be especially careful in the study of the very short returns (Section \ref{very.short.returns}) and we encourage the reader to take a particular attention to Proposition \ref{prop.short.returns} which is a complex adaptation of a lemma of \cite{CC13} on the measure of the set of very short returns.
In Section~\ref{example} we consider four examples, namely  random $C^2$ expanding interval maps and  random Pomeau-Manneville maps where we use the derivation of the fibered measures
from~\cite{BB16} and the decay of sequential systems for parabolic maps~\cite{AHNTV14}; the third example is the random perturbation of non-uniformly expanding maps with critical set, which can be modeled by a random Gibbs-Markov-Young structure~\cite{LV}; the final example is the random perturbation of partially hyperbolic attractors, where the system exhibits non-trivial stable and unstable leaves depending on $\omega$.

\section{Random Maps} \label{random.maps}

Let $\theta:\Omega\to\Omega$ be the shift map on a full shift space $\Omega$ 
with $\theta$-invariant probability measure $\nu$. Let $M$ be a compact manifold
and for every $\omega\in\Omega$, let $T_\omega:M\to M$ be a measurable map. 
The skew product $S$ on $\Omega\times M$ is then given by 
$S(\omega,x)=(\theta\omega,T_\omega x)$.
For the iterates we obtain  $S^n(\omega,x)=(\theta^n\omega,T_\omega^nx)$ where
$T_\omega^n=T_{\theta^{n-1}\omega}\circ\cdots\circ T_{\theta\omega}\circ T_\omega$.

Assume that $\mathbb{P}$ is a measure on $\Omega\times M$ 
invariant under the skew action $S$ and with marginal $\nu$ on $\Omega$. There is a class of measures $\mu^\omega$ for $\omega\in \Omega$ on
$M$, such that $d\mathbb{P}=d\mu^\omega d\nu(\omega)$.
These measures satisfy the invariance property $T_\omega^*\mu^\omega=\mu^{\theta\omega}$ for $\nu$-almost every $\omega\in \Omega$ .
We denote by $\mu=\int_\Omega \mu^\omega\,d\nu(\omega)$ the marginal measure on $M$.

For every realisation $\omega\in\Omega$ let $\Gamma^u(\omega)$ be a collection of 
unstable leaves $\gamma^u(\omega)$
and  $\Gamma^s(\omega)$ a collection of stable leaves $\gamma^s(\omega)$. We assume
that $\gamma^u\cap\gamma^s$ consists of a single point for all $(\gamma^u,\gamma^s)\in\Gamma^u\times\Gamma^s$. 
The map $T_\omega$ contracts along the stable leaves and similarly $T_\omega^{-1}$ 
contracts along the unstable leaves.

For an unstable leaf $\gamma^u(\omega)$ denote by $\mu^\omega_{\gamma^u}$ the 
disintegration of $\mu^\omega$ with respect to the $\gamma^u$. We assume that $\mu^\omega$ 
has a product like decomposition  
$d\mu^\omega=d\mu^\omega_{\gamma^u}d\upsilon^\omega(\gamma^u)$,
where $\upsilon^\omega$ is a transversal measure. That is, if $f$ is a function on $M$ then
$$
\int f(x)\,d\mu^\omega(x)
=\int_{\Gamma^u(\omega)} \int_{\gamma^u}f(x)\,d\mu^\omega_{\gamma^u}(x)\,d\upsilon^\omega(\gamma^u)
$$

If $\gamma^u, \hat\gamma^u\in\Gamma^u(\omega)$ are two unstable leaves then the 
holonomy map $\mathcal{H}:\gamma^u\cap\Lambda\to \hat\gamma^u\cap\Lambda$ 
is defined by $\mathcal{H}(x)=\hat\gamma^u\cap\gamma^s(x)$ for $x\in\gamma^u\cap\Lambda$,  
where $\gamma^s(x)$ is the local stable leaf through $x$.

Let us denote by 
$J_n^\omega=\frac{dT_\omega^n\mu^\omega_{\gamma^u}}{d\mu^\omega_{\gamma^u}}$
the Jacobian of the map $T_\omega^n$ with respect to the measure $\mu^\omega$
in the unstable direction.

Fix $\omega$ and let $\gamma^u$ be a local unstable leaf.
Assume there exists $R>0$ and  for every $n\in\mathbb{N}$ finitely many
 $y_k\in T_\omega^n\gamma^u$ so that 
$T_\omega^n\gamma^u\subset\bigcup_k B_{R,\gamma^u}(y_k)$, 
where $B_{R,\gamma^u}(y)$ is the embedded $R$-disk centered at $y$ in the unstable leaf
 $\gamma^u$.
Denote by $\zeta_{\varphi,k}=\varphi(B_{R,\gamma^u}(y_k))$ where $\varphi\in \mathscr{I}_n^\omega$
 and $\mathscr{I}_n^\omega$ denotes the  inverse branches of $T_\omega^n$. 
 We call $\zeta$ an $n$-cylinder.
Then there exists a constant $L$ so that the number of overlaps
$N_{\varphi,k}=|\{\zeta_{\varphi',k'}: \zeta_{\varphi,k}\cap\zeta_{\varphi',k'}\not=\varnothing,
\varphi'\in\mathscr{I}^\omega_n\}|$
is bounded by $L$ for all $\varphi\in \mathscr{I}_n^\omega$ and for all $k$ and $n$. 
This follows from the fact that
$N_{\varphi,k}$ equals $|\{k': B_{R,\gamma^u}(y_k)\cap B_{R,\gamma^u}(y_{k'})\not=\varnothing\}|$ 
which is uniformly bounded by some constant $L$.

To obtain an exponential law for the distribution of hitting time and return time, we need a few assumptions. First of all, we need information on the annealed and quenched decay of correlations:\\
(I) There exists a decay function $\lambda(k)$ so that 
$$
\left|\int_\Omega\int_MG(H\circ T_\omega^k)\,d\mu^\omega\,d\nu(\omega)
-\mu(G)\mu(H)\right|
\le \lambda(k)\|G\|_{Lip}\|H\|_\infty\qquad\forall k\in\mathbb{N}
$$
for every $G\in Lip(M,\mathbb{R})$ and $H\in L^\infty(M,\mathbb{R})$.\\
(II) For $\nu$-almost every $\omega$, the individual measure $\mu^\omega$ has the following decay of correlations
$$
\left|\int_MG(H\circ T_\omega^k)\,d\mu^\omega
-\mu^\omega(G)\mu^{\theta^k\omega}(H)\right|
\le \lambda(k)\|G\|_{Lip}\|H\|_\infty\qquad\forall k\in\mathbb{N},
$$
for every $H\in L^\infty(M,\mathbb{R})$ which are constant on 
local stable leaves $\gamma^s$ of $T_\omega$ and for every $G\in Lip(M,\mathbb{R})$.\\
Then, we need some geometric assumptions:\\
(III)  (Distortion) For $\nu$-almost every $\omega$, we  require that 
$\frac{J^\omega_n(x)}{J^\omega_n(y)}=\mathcal{O}(\Theta(n))$ for all $x,y\in\zeta$ and $n$, where 
$\zeta$ are $n$-cylinders in unstable leaves $\gamma^u$ and $\Theta$ is a non-decreasing 
function which below we assume to be $\Theta(n)=\mathcal{O}(n^{\kappa'})$ for 
some $\kappa'\ge0$. \\
(IV) (Contraction) There exists a function $\delta(n)\to0$ which decays at least summably polynomially, i.\,e.\, $\delta(n) = \mathcal{O}(n^{-\kappa})$ with $\kappa > 1$, so that 
$\diam\zeta\le \delta(n)$ for all $n$-cylinder $\zeta$ and all $n$ and $\omega$.\\
Finally, we need some informations on the measures:\\
(V) There exist $0<d_0<d_1$ and $K$ such that $\rho^{d_0}\ge\mu(\ball)\ge\rho^{d_1}$ and 
$$
\frac1K\le\frac{\mu(B_{\rho})}{\mu^\omega(\ball)}\leq K
$$ 
for all $\rho>0$ small enough and for $\nu$-almost every $\omega$.\\
(VI) (Annulus condition) Assume that for some $\xi\ge\beta>0$:
$$
\sup_\omega\frac{\mu^\omega(B_{\rho+r}\setminus B_{\rho-r})}{\mu(\ball)} 
= \mathcal{O}(\frac{r^\xi}{\rho^\beta})
$$
for every $r < \rho$.\\

Our main result is on the distribution of the first hitting and return times.
For a set $B\subset M$  $\omega\in\Omega$ one defines the function
$$
\tau^\omega_B(x)=\inf\{j\ge1:\; T_\omega^jx\in B\}.
$$
This is the {\em hitting time function} on $M$ or the {\em  return time function}
when restricted to $B$ itself. We can now state our main results, here $\mu^\omega_B$ is 
the conditional measure of $\mu^\omega$ restricted to the set $B\subset M$. First of all, under the previous assumption we obtain an exponential law for the distribution of hitting times.


\begin{thrm}\label{main.theorem}
Let a random dynamical system satisfy the above requirements~(I)--(VI) where $\delta$ and $\lambda$ both decay super polynomially fast.

Then
$$
\mu^\omega\!\left(y\in M: 
\tau^\omega_{B_{\rho}(\mathsf{x})}(y)>\frac{t}{\mu(B_{\rho}(\mathsf{x}))}\right)
\longrightarrow e^{-t}\qquad \mbox{ as }\rho\to0
$$

for all $t>0$ for $\mu^\omega$-almost every $\mathsf{x}\in M$ and $\nu$-almost every $\omega\in\Omega$.
\end{thrm}
Under the same assumptions, we can also prove an exponential law for the distribution of the return times. It is important to notice that this is the first paper where a quenched law is proved for the return times. Indeed, in \cite{RSV,RT,FFV17}, the law was only obtained for the hitting times. This is a significant difference with the deterministic setting where if a limiting distribution exists for the hitting times, then it also exists for the return times (and the other way around) \cite{HLV}.
\begin{thrm}\label{main.theorem1}
Let a random dynamical system satisfy the above requirements~(I)--(VI) where $\delta$ and $\lambda$ both decay super polynomially fast.

Then
$$
\mu^\omega_{B_{\rho}(\mathsf{x})}\!\left(y\in M: 
\tau^\omega_{B_{\rho}(\mathsf{x})}(y)>\frac{t}{\mu(B_{\rho}(\mathsf{x}))}\right)
\longrightarrow e^{-t}\qquad \mbox{ as }\rho\to0
$$
for all $t>0$ for $\mu^\omega$-almost every $\mathsf{x}\in M$ and $\nu$-almost every $\omega\in\Omega$.
\end{thrm}
As can be observe in the next theorem, even if $\delta$ and $\lambda$ do not decay super polynomially fast, one can still obtain an exponential distribution assuming some technical conditions on the constants present in the hypothesis (I)--(VI). 

Let us denote $u_0$ the largest of the elements $\tilde u$ so that 
$\mu^\omega_{\gamma^u}(B_\rho(x))\le C_1\rho^{\tilde u}$ for all $\rho>0$ small enough
and for almost all $x\in \gamma^u$, every unstable leaf $\gamma^u$ and $\nu$-almost all $\omega$. By assumption (V), such element exists and is at least equal to $d_0$.

We have a version of Theorems \ref{main.theorem} and \ref{main.theorem1} for polynomial decay:

\begin{thrm}\label{main.theorem.polynomial}
Let a random dynamical system satisfy the above requirements~(I)--(VI).
Assume one of the following two conditions is satisfied.\\
(A) $\delta(n)=\mathcal{O}(n^{-\kappa})$ and $\lambda(k)=\mathcal{O}(k^{-p})$ 
decay polynomially with the respective rates $\kappa>1$ and $p>1$ satisfying  $\kappa\xi>1$, 
$\max\{\frac{d_1\beta}{\kappa\xi-1}, \left(\frac{\beta}{\xi}+d_1\right)\frac{1}{p} \} < \min\{1,u_0\}$ and $\gamma=\kappa u_0-2-\kappa'>1$.\\
(B) $\delta$ decays super polynomially, $\lambda(k)=\mathcal{O}(k^{-p})$ 
decays polynomially and $\left(\frac{\beta}{\xi}+d_1\right)\frac{1}{p} < \min\{1,u_0\}$.

Then the conclusions of Theorems \ref{main.theorem} and \ref{main.theorem1} hold.
\end{thrm}
\noindent {\it Remark:} If $d$ is the dimension of the measure $\mu$ then 
$d_0<d<d_1$ can be chosen arbitrarily close to $d$. The assumptions in
case~(A) then simplify to
$\max(\frac{d\beta}{k\xi-1}, \frac{\beta/\xi+d}{p}) < 1\wedge u_0$.

The proof of the theorems is done in the next three sections. In Section~\ref{entry.times.distribution}
we prove that the limiting distribution is exponential (the convergence is realised
for $\mu^\omega$-almost every point $\mathsf{x}$) using a key proposition (Proposition~\ref{prop.short.returns}).  In Section~\ref{very.short.returns}, we prove the key proposition, i.e. we show that the measure of the set of points
whose neighbourhoods return to themselves within a very small number of iterates is small. In Section~\ref{return.times.distribution}
we then prove the limiting result for return times, an alternative proof is also given in Section~\ref{section.alternative}. Finally, in Section~\ref{example} 
we look at interval maps as an example to apply our main result.

Throughout the paper $C_0, C_1, \hdots$ and $\alpha, \beta, \hdots$ denote global constants
 while $c_0, c_1, \hdots$ are locally defined constants.

\section{Very Short Returns}\label{very.short.returns}

For a ball $\ball(\mathsf{x})\subset M$ we define the counting function
\vspace{-0.2cm}
\begin{equation*}
Z^{\omega}_{\mathsf{x},\rho,t}(y)=\sum_{n=0}^{\floor{t/\mu(\ball(\mathsf{x}))}-1} {\ind_{\ball(\mathsf{x})}} \circ T_\omega^n(y)
\end{equation*}
which tracks the number of visits a trajectory of the point $y \in M$ makes to the ball $\ball(\mathsf{x})$
on an orbit segment of length $N=\floor{t/\mu(\ball(\mathsf{x}))}$, where $t$ is a positive
parameter.
Clearly $\tau^\omega_{\ball(\mathsf{x})}(y)>N$ exactly if 
$Z^{\omega}_{\mathsf{x},\rho,t}(y)=0$.

Let us put $J=\mathfrak{a}\abs{\log\rho}$ (with the number $\mathfrak{a}$ determined below)
and define the following counting function
for very short returns along the orbit segment:
$$
Y^\omega_{\mathsf{x},\rho,t}(y)
=\sum_{j=1}^N\ind_{\ball(\mathsf{x})\cap\{\tau^{\theta^j\omega}_{\ball(\mathsf{x})}<J\}}\circ T_\omega^j(y).
$$

In order to control the contribution made by the terms involving $Y^\omega_{\mathsf{x},\rho,t}$
we first have to consider the set of points that have very short returns. For that purpose, 
for a positive parameter $\mathfrak{a}$ let us define the set 
\begin{equation} \label{defM_rho,J}
\mathcal{V}_\rho^\omega(\mathfrak{a}) = \{\mathsf{x} \in M: \ball(\mathsf{x}) \cap T_\omega^{n}\ball(\mathsf{x}) \ne \varnothing \text{ for some } 1 \leq n < \mathfrak{a}\abs{\log\rho}\},
\end{equation}
where $\rho>0$. The set $\mathcal{V}_\rho^\omega$ represents the points within $M$ with very short return times with respect to the realisation $\omega$. The following of this section is dedicated to estimate the size of the set $\mathcal{V}_\rho^\omega$
and as a consequence to estimate the quantity $\mu^\omega(Y^\omega_{\mathsf{x},\rho,t})$.


Let us put $J=\mathfrak{a}\abs{\log\rho}$ where $\mathfrak{a} = (4 \log A)^{-1}$ with
$$
A =\sup_\omega 
\left(\norm{DT_\omega}_{\mathscr{L}^\infty} + \norm{DT_\omega^{-1}}_{\mathscr{L}^\infty}\right)
$$
($A\ge2$).
Then the set $\mathcal{V}_\rho \subset M$ consists of all 
$\mathsf{x} \in M$ for which $\ball(\mathsf{x}) \cap T_\omega^{n}\ball(\mathsf{x}) \ne \varnothing$
for some $1 \leq n < J$.

\subsection{Estimate on the measure of $\mathcal{V}_\rho$} \label{shortsMain}

Now we can show that the set of centres where small balls have very short returns is small. Even if we follow the proof of Proposition~5.1 of~\cite{HW14} which modelled after 
 Lemma~4.1 of~\cite{CC13}, we emphasize that this is not a direct adaptation, in particular in view of Lemma \ref{B.3} and its differences with the deterministic version.

\begin{prop}\label{prop.short.returns}
There exist constants $C_{2}>0$ such that for all $\rho$ small enough and all $\omega$
and $\hat\omega$:
$$
\mu^{\hat{\omega}}(\mathcal{V}^\omega_\rho)
\le C_2\!\left(e^{-\mathfrak{c}\abs{\log \rho}^{1/2}}  +\delta(\mathfrak{ab}\abs{\log\rho})^{u_1}|\log\rho|^{\kappa'}\right)
$$
where  $u_1=u_0$ if $\delta$ decays superpolynomially and
 $u_1=u_0-\frac1\kappa$ if $\delta$ decays polynomially with power $\kappa$
 and $\mathfrak{b},\mathfrak{c}>0$ (recall that $\Theta(n)=\mathcal{O}(n^{\kappa'})$).
\end{prop}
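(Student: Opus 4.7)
The plan is to split $\mathcal{V}^\omega_\rho$ according to the length of the return time. Setting $E_n = \{x : B_\rho(x)\cap T_\omega^n B_\rho(x)\neq\varnothing\}$, decompose $\mathcal{V}^\omega_\rho = V_{\mathrm{sh}} \cup V_{\mathrm{lg}}$ with $V_{\mathrm{sh}} = \bigcup_{1 \le n < n_1} E_n$ and $V_{\mathrm{lg}} = \bigcup_{n_1 \le n < J} E_n$, where the cutoff is $n_1 = \mathfrak{b} J = \mathfrak{a}\mathfrak{b}|\log\rho|$ with $\mathfrak{b}\in(0,1)$ chosen so that $\delta(n_1) \gtrsim \rho^{3/4}$. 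The basic geometric observation, used in both pieces, is that if $x\in E_n$ with witness $y\in B_\rho(x)\cap T_\omega^{-n}B_\rho(x)$, then
$$ d(x, T_\omega^n x) \le d(x, T_\omega^n y) + d(T_\omega^n y, T_\omega^n x) \le \rho + A^n \rho, $$
and for $n < J$ the choice $\mathfrak{a}=(4\log A)^{-1}$ forces $A^n \le \rho^{-1/4}$, so $E_n \subset \{x : d(x, T_\omega^n x) \le 2\rho^{3/4}\}$ throughout the relevant range.

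For the long-return piece $V_{\mathrm{lg}}$, I would work leaf by leaf using $d\mu^{\hat\omega} = d\mu^{\hat\omega}_{\gamma^u}\,d\upsilon^{\hat\omega}(\gamma^u)$ and cover each unstable leaf $\gamma^u$ by $n$-cylinders $\zeta$ of $T_\omega^n$. If $\zeta \cap E_n \neq \varnothing$, fix any $x_0 \in \zeta\cap E_n$; since $\mathrm{diam}\,\zeta\le\delta(n)$ and every $x\in \zeta\cap E_n$ satisfies $d(x,T_\omega^n x)\le 2\rho^{3/4}$, a triangle inequality shows that $T_\omega^n(\zeta\cap E_n)$ is contained in an unstable disk of radius $\le 4\rho^{3/4}+\delta(n)=O(\delta(n))$ on the image leaf $T_\omega^n\gamma^u$. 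Applying (IV) to that image disk and pulling the estimate back through $T_\omega^n|_\zeta$ via the distortion bound (V) yields
$$ \mu^{\hat\omega}_{\gamma^u}(E_n \cap \zeta) \lesssim \Theta(n)\,\delta(n)^{u_0}\,\mu^{\hat\omega}_{\gamma^u}(\zeta), $$
with the transfer from $\mu^\omega$-distortion to $\mu^{\hat\omega}$-estimates absorbed by (VIII). Summing over the $L$-bounded-overlap cylinder cover and integrating in $\upsilon^{\hat\omega}$ gives $\mu^{\hat\omega}(E_n)\lesssim n^{\kappa'}\delta(n)^{u_0}$. Summing then over $n_1 \le n < J$: in the polynomial regime $\kappa u_0 > 1$ makes the tail $\sum_{n\ge n_1} n^{-\kappa u_0}$ comparable to $n_1^{1-\kappa u_0}\sim\delta(n_1)^{u_0-1/\kappa}$; in the super-polynomial regime the first term already dominates at $\delta(n_1)^{u_0}$. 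Either way, a factor of $\Theta(J)=O(|\log\rho|^{\kappa'})$ out front gives the second term of the proposition.

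The short-return piece $V_{\mathrm{sh}}$ is handled by the same leaf argument, but with the scale $\rho^{3/4}$ replacing $\delta(n)$ (since for small $n$ the bound $E_n \subset \{d(\cdot,T_\omega^n\cdot)\le 2\rho^{3/4}\}$ yields a tighter radius than $\delta(n)$). This gives $\mu^{\hat\omega}(E_n)\lesssim \Theta(n)\rho^{3u_0/4}$, and summing over $1\le n < n_1\le\mathfrak{a}|\log\rho|$ yields a total of order $|\log\rho|^{\kappa'+1}\rho^{3u_0/4}$. Being polynomially small in $\rho$, this is dominated by $e^{-\mathfrak{c}|\log\rho|^{1/2}}$ for any fixed $\mathfrak{c}>0$ once $\rho$ is small enough, which is the first term of the proposition.

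The main obstacle will be the geometric step in the $V_{\mathrm{lg}}$ bound: carefully verifying that $T_\omega^n(\zeta\cap E_n)$, viewed inside the image leaf $T_\omega^n\gamma^u$, is contained in a single unstable ball of radius $O(\delta(n))$ on the image leaf. This requires exploiting the stable/unstable product structure together with the $\delta(n)$-smallness of cylinders, and making sure that $\Theta(n)=O(n^{\kappa'})$ and $\delta(n)^{u_0}$ enter exactly once each, so that the tail summation in $n$ produces $\delta(\mathfrak{a}\mathfrak{b}|\log\rho|)^{u_1}|\log\rho|^{\kappa'}$ rather than a weaker power; there is also the subtlety that (V) gives distortion for $\mu^\omega$ while we ultimately need estimates for $\mu^{\hat\omega}$, which forces one to invoke (VIII) at precisely the right scale.
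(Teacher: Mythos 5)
Your proposed decomposition at the cutoff $n_1 = \mathfrak{b}J$ and the leaf-by-leaf cylinder/distortion argument for the long-return piece mirror the paper's strategy, but there are two genuine gaps.

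First, the $\mu^\omega$-vs-$\mu^{\hat\omega}$ issue cannot be ``absorbed by (VIII).'' Assumption~(VIII) compares $\mu^\omega(B_\rho)$ to the marginal $\mu(B_\rho)$ for \emph{balls} only; it gives no quasi-invariance between two fibred measures on cylinders or on the sets $E_n \cap \zeta$, which is what your distortion step needs. The paper instead forms $\tilde\omega = \omega_0\cdots\omega_{n-1}\hat\omega$, so that $T_{\tilde\omega}^n = T_\omega^n$ and $\theta^n\tilde\omega = \hat\omega$, and then uses the pushforward identity $T_{\tilde\omega}^*\mu^{\tilde\omega} = \mu^{\theta\tilde\omega}$ iterated $n$ times to write $\mu^{\hat\omega}(\mathcal{N}^\omega_\rho(n)) = \mu^{\tilde\omega}(T_\omega^{-n}\mathcal{N}^\omega_\rho(n))$. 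Now the distortion hypothesis~(V) applies legitimately to $J_n^{\tilde\omega}$, and the conditional ratio is governed by $\mu^{\hat\omega}_{\hat\gamma^u}$ on the image side. Without this pullback the distortion bound is simply not available for the measure you are trying to estimate.

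Second, and more seriously, the short-return piece does not follow by ``replacing $\delta(n)$ with $\rho^{3/4}$.'' The diameter of $T_\omega^n(\zeta\cap E_n)$ is bounded by $\lesssim \rho^{3/4} + \delta(n)$, and for small $n$ it is $\delta(n)$ that dominates --- the $n$-cylinders of $T_\omega^n$ for small $n$ are not small (indeed $\delta(1)$ may be of order one), so the image of $\zeta\cap E_n$ under $T_\omega^n$ need not lie in a ball of radius $O(\rho^{3/4})$, and the bound degenerates to $\mu^{\hat\omega}(E_n) \lesssim \Theta(n)$. Concluding $\mu^{\hat\omega}(E_n)\lesssim\Theta(n)\rho^{3u_0/4}$ for $n$ in the short range is therefore unjustified. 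This is exactly the obstacle the paper overcomes with the randomised Lemma~B.3: the inclusion $\mathcal{N}^\omega_\rho(n)\subset\mathcal{N}^{\tilde\omega}_{s_p\rho}(2^p n)$, with $p=p(n)$ chosen so that $n'=2^p n \in [\lceil\mathfrak{b}J\rceil, 2\mathfrak{b}J]$ and $\rho'=s_p\rho\le\rho^{3/4}$, pushes the small-$n$ level sets into the regime where the cylinder diameter $\delta(n')$ is genuinely small, after which the long-return estimate can be applied. Without some version of that boosting trick the short-return estimate does not close.
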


\begin{proof} 
Let us note that since $T_\omega$ is a diffeomorphism one has
$$
\ball(\mathsf{x}) \cap T_\omega^{n}\ball(\mathsf{x}) \ne \varnothing \qquad \iff \qquad \ball(\mathsf{x}) \cap T_\omega^{-n}\ball(\mathsf{x}) \ne \varnothing.
$$
We partition $\mathcal{V}^\omega_\rho$ into level sets $\mathcal{N}^\omega_{\rho}(n)$ as follows
$$
\mathcal{V}^\omega_\rho = \{\mathsf{x} \in M: \ball(\mathsf{x}) \cap T_\omega^{-n}\ball(\mathsf{x}) \ne \varnothing \text{ for some } 1 \leq n < J\}
 = \bigcup_{n=1}^{J-1} \mathcal{N}^\omega_{\rho}(n)
 $$
 where
 $$
  \mathcal{N}^\omega_{\rho}(n) = \{\mathsf{x} \in M: \ball(\mathsf{x}) \cap T_\omega^{-n}\ball(\mathsf{x}) \ne \varnothing \}.
$$
The above union is split into two collections $\mathcal{V}_\rho^{\omega,1}  $ and $\mathcal{V}_\rho^{\omega,2} $, where
\begin{equation*}
\mathcal{V}_\rho^{\omega,1} = \bigcup_{n=1}^{\floor{\mathfrak{b} J}} \mathcal{N}^\omega_{\rho}(n) \quad \text{and} \quad \mathcal{V}_\rho^{\omega,2} = \bigcup_{n=\roof{\mathfrak{b} J}}^{J} \mathcal{N}^\omega_{\rho}(n).
\end{equation*}
and where the constant $\mathfrak{b} \in (0,1)$ will be chosen below.
In order to find the measure of the total set we will estimate the measures of the two parts separately.

\vspace{3mm}

\noindent {\bf (I) Estimate of $\mathcal{V}_\rho^{\omega,2}$}

\vspace{1mm}

\noindent We will derive a uniform estimate for the measure of the level sets $\mathcal{N}^\omega_{\rho}(n)$ when $n > \mathfrak{b} J$.
For  this purpose define 
$$
\tilde\omega=\omega_0\dots\omega_{n-1}{\hat{\omega}}_0{\hat{\omega}}_1\dots 
= \omega_0\dots\omega_{n-1}{\hat{\omega}}.
$$
We have $T_{\tilde{\omega}}^n = T_{\omega_{n-1}}\circ \cdots \circ T_{\omega_0} = T_\omega^n$. Also notice that $\theta^n\tilde{\omega} = {\hat{\omega}}$. Thus
$$
\mu^{\hat{\omega}}(\mathcal{N}^\omega_\rho(n))=\mu^{\tilde{\omega}}(T_{\tilde{\omega}}^{-n}\mathcal{N}^\omega_{\rho}(n))
\le\sum_{\zeta} \mu^{\tilde{\omega}}(T_\omega^{-n}\mathcal{N}^\omega_{\rho}(n) \cap \zeta)
$$
where the sum is over all $n$-cylinders $\zeta$.
We will consider each of the measures $\mu^{\tilde{\omega}}(T_\omega^{-n}\mathcal{N}^\omega_{\rho}(n) \cap \zeta)$ separately by using the product form of the measures $\mu^{\hat{\omega}}$.
By distortion of the Jacobian we obtain
\begin{eqnarray}
\mu^{\tilde{\omega}}_{\gamma^u}(T_\omega^{-n}\mathcal{N}^\omega_{\rho}(n) \cap \zeta)
&=& \frac{\mu^{\tilde{\omega}}_{\gamma^u}(T_\omega^{-n}\mathcal{N}^\omega_{\rho}(n) \cap \zeta)}
{\mu^{\tilde{\omega}}_{\gamma^u}(\zeta)} \, \mu_{\gamma^u}^{\tilde{\omega}}(\zeta)\notag\\
&\leq &\Theta(n)  \,\frac{\mu^{\hat{\omega}}_{\hat\gamma^u}(T_\omega^{n}(T_\omega^{-n}\mathcal{N}^\omega_{\rho}(n) \cap \zeta))}{\mu^{\hat{\omega}}_{\hat\gamma^u}(T_\omega^n\zeta)} \,\mu_{\gamma^u}^{\tilde{\omega}}(\zeta),  \label{level_summand}
\end{eqnarray}
where, as before, $\hat\gamma^u=\gamma^u(T_\omega^nx)$ for $x\in\zeta\cap\gamma^u$.
We estimate the numerator by finding a bound for the diameter of the set. Let the points $x$ and $z$ in
$T_\omega^{-n}\mathcal{N}^\omega_{\rho}(n)$ be such that 
$ x, z \in T_\omega^{-n}\mathcal{N}^\omega_{\rho}(n) \cap \zeta\cap\gamma^u$
for an unstable leaf $\gamma^u$. 

Note that $T_\omega^n x , T_\omega^n z \in \mathcal{N}^\omega_{\rho}(n)$, there exists $y\in B_\rho(T^n_\omega x)$ such that $ T^n_\omega y\in B_\rho(T^n_\omega x)$
(as $\ball(T_\omega^nx)\cap T_\omega^{-n}\ball(T_\omega^nx)\not=\emptyset$), thus
$$
d(T_\omega^n x, x)
\leq d(T_\omega^n x, T_\omega^n y )+d(T_\omega^n y, y)+d(y, x)
\leq \rho+2\rho+A^n d(T_\omega^n x, T_\omega^n y )
\leq (3+A^n)\rho.
$$
Hence as $y\in B_{A^n\rho}(x)$:
$$
d(T_\omega^n x, T_\omega^n z) 
\leq d(T_\omega^n x, x) + d(x,z) + d(z, T_\omega^n z)
 \leq 6 A^n \rho + d(x,z).
$$ 
We have
$$
d(x, z) \leq \diam \zeta < \delta(n)
$$
by assumption. Therefore
$$
d(T_\omega^nx, T_\omega^nz)  
\leq 6A^n \rho + d(x,z) 
 \leq 6\, A^{n} \rho + \delta(n)
$$
If we choose $\mathfrak{a}>0$ so that $\mathfrak{a}<\frac{1}{2\log A}$ then
$A^n\rho< e^{-\frac12\abs{\log\rho}^{1/2}}$. If $n\ge \mathfrak{b}\abs{\log\rho}$ for 
some $\mathfrak{b}\in(0,\mathfrak{a})$ then
$$
d(T_\omega^nx, T_\omega^nz)  \leq c_1(e^{-\mathfrak{c}'\abs{\log\rho}^{1/2}}+\delta(n))
$$
for some constant $c_1$ where $\mathfrak{c}'=\min(\frac12,\sqrt\mathfrak{b})$.
Taking the supremum over all points $x$ and $z$ yields
$$
\abs{T_\omega^n (T_\omega^{-n}\mathcal{N}^\omega_{\rho}(n) \cap \zeta\cap\gamma^u)} 
\leq c_1(e^{-\mathfrak{c}'\abs{\log\rho}^{1/2}}+\delta(n)).
$$
By assumption (V) on the relationship between the measure and the metric
$$
\mu^{\hat{\omega}}_{\hat\gamma^u}(T_\omega^n (T_\omega^{-n}\mathcal{N}^\omega_{\rho}(n) \cap \zeta))
\leq c_2(e^{-u_0\mathfrak{c}'\abs{\log \rho}^{1/2}}+\delta(n)^{u_0})
$$
Incorporating the estimate into~\eqref{level_summand} yields
$$
\mu^{\tilde{\omega}}_{\gamma^u}(T_\omega^{-n}\mathcal{N}^\omega_{\rho}(n) \cap \zeta)
\leq c_4\Theta(n)\, (e^{-{u_0}\mathfrak{c}'\abs{\log \rho}^{1/2}}+\delta(n)^{u_0})
 \mu^{\tilde{\omega}}(\zeta),
$$
for some $c_4$.
Integrating over $d\upsilon(\gamma^u)$ and summing over $\zeta$ yields
$$
\mu^{\hat{\omega}}(\mathcal{N}^\omega_{\rho}(n))
\le c_4\Theta(n) \, (e^{-{u_0}\mathfrak{c}'\abs{\log \rho}^{1/2}}+\delta(n)^{u_0})
\sum_\zeta \mu^{\tilde{\omega}}(\zeta) 
\le c_5\Theta(n) \, (e^{-{u_0}\mathfrak{c}'\abs{\log \rho}^{1/2}}+\delta(n)^{u_0})
$$
as $\sum_\zeta \mu^{\tilde{\omega}}(\zeta)=\mathcal{O}(1)$.
Consequently
\begin{eqnarray}
\mu^{\hat{\omega}}(\mathcal{V}_\rho^{\omega,2}) 
& \le& \sum_{n=\roof{\mathfrak{b} J}}^{J} \mu^{\hat{\omega}}(\mathcal{N}^\omega_{\rho}(n))\notag\\
 &\le& c_5\Theta(J) \, Je^{-{u_0}\mathfrak{c}'\abs{\log \rho}^{1/2}} 
 + \sum\limits_{n=\roof{\mathfrak{b} J}}^{J}\Theta(n)\delta(n)^{u_0}\notag\\
 &\le& c_6(e^{-\mathfrak{c}''\abs{\log \rho}^{1/2}}
 +\delta(\mathfrak{ab}\abs{\log\rho})^{u_1}|\log\rho|^{\kappa'})\label{Th2refPt2}
\end{eqnarray}
for some constant $\mathfrak{c}''>0$ (and $\rho$ small enough) as $J=\lfloor\mathfrak{a}\abs{\log\rho}\rfloor$. Here $u_1\le u_0$
is so that $\sum_{n=n_0}^\infty\delta(n)^{u_0}\le c_7\delta(n_0)^{u_1}$
for some constant $c_7$.

\vspace{3mm}

\noindent {\bf (II) Estimate of $\mathcal{V}_\rho^{\omega,1}$}

\vspace{1mm}

We will need the following randomised version of Lemma~B.3 from~\cite{CC13}.

\begin{lem}\label{B.3}
Put $s_p = 2^p \, \frac{A^{n \, 2^p}-1}{A^n-1}$. Then  for every $p,k$ integers, $\rho>0$ and $\omega$
there exists an $\tilde{\omega}$ so that
$$
\left\{\mathsf{x} \in M: B_{\rho}(\mathsf{x}) \cap T_\omega^{k}B_{\rho}(\mathsf{x}) \ne \varnothing\right\}
\subset
\left\{\mathsf{x} \in M: B_{s_p \rho}(\mathsf{x}) \cap T_{\tilde{\omega}}^{k2^p}B_{s_p \rho}(\mathsf{x}) \ne \varnothing\right\}.
$$
\end{lem}
\noindent {\bf Proof.} 
Consider the case $p=1$. Let $x$ such that $\ball(\mathsf{x}) \cap T_\omega^{k}\ball(\mathsf{x}) \ne \varnothing$. This implies that there exist $z\in \ball(\mathsf{x}) \cap T_\omega^{-k}\ball(\mathsf{x})$. For any $u\in T_\omega^{k}\ball(\mathsf{x})$, there exist $v\in \ball(\mathsf{x})$ such that $T_\omega^k v=u$, thus
\[d(u,x)\leq d(u, T_\omega^kz)+d(T_\omega^kz,x)\leq d(T_\omega^k v,T_\omega^kz)+2\rho\leq (2A^k+2)\rho.\]
Therefore, $T_\omega^{k}\ball(\mathsf{x})\subset B_{(2A^k+2)\rho}(\mathsf{x})$.

One can observe that if $\ball(\mathsf{x}) \cap T_\omega^{k}\ball(\mathsf{x}) \ne \varnothing$ then $T_\omega^{k}\left(\ball(\mathsf{x}) \cap T_\omega^{k}\ball(\mathsf{x})\right) \ne \varnothing$ thus $T_\omega^{k}\ball(\mathsf{x}) \cap  T_\omega^{k}(T_\omega^{k}\ball(\mathsf{x})) \ne \varnothing$ and therefore $B_{(2A^k+2)\rho}(\mathsf{x}) \cap  T_\omega^{k}(T_\omega^{k}B_{(2A^k+2)\rho}(\mathsf{x})) \ne \varnothing$. Finally, this gives us
\[\{\mathsf{x} \in M: \ball(\mathsf{x}) \cap T_\omega^{k}\ball(\mathsf{x}) \ne \varnothing \}\subset \{\mathsf{x} \in M: B_{(2A^k+2)\rho}(\mathsf{x}) \cap T_\omega^{k}(T_\omega^{k}B_{(2A^k+2)\rho}(\mathsf{x})) \ne \varnothing \}.\]
If we put $\tilde\omega=\omega_0\dots\omega_{k-1}\omega_0\omega_1\dots$ then $T_\omega^{k}\circ T_\omega^{k}=T^{2k}_{\tilde\omega}$. This proves the case $p=1$. The general case is shown similarly.
\qed

\vspace{3mm} 


Let us now consider the case $1 \leq n \leq \floor{\mathfrak{b}J}$ and let as in Lemma~\ref{B.3}
$s_p = 2^p \, \frac{A^{n \, 2^p}-1}{A^n-1}$.
Hence by Lemma~\ref{B.3} one has 
$\mathcal{N}^\omega_{\rho}(n) \subset \mathcal{N}^{\tilde{\omega}}_{s_p \rho}(2^p n)$,
where $\tilde\omega=\tilde{\omega}(n)$ depends on $n$,
for any $p \geq 1$, and in particular for $p(n) = \floor{\log \mathfrak{b}J - \lg n} + 1$.
Therefore
$$
\bigcup_{n=1}^{\floor{\mathfrak{b}J}} \mathcal{N}^\omega_{\rho}(n) \subset \bigcup_{n=1}^{\floor{\mathfrak{b}J}} \mathcal{N}^{\tilde\omega}_{s_{p(n)} \rho}(2^{p(n)} n).
$$
Now define
\begin{equation*}
n' = n 2^{p(n)} \qquad \text{ and } \qquad \rho' = s_{p(n)} \rho.
\end{equation*}
A direct computation shows that $1 \leq n \leq \floor{\mathfrak{b}J}$ implies $\roof{\mathfrak{b}J} \leq n' \leq 2 \mathfrak{b}J$ and so
$$
\mathcal{V}_\rho^{\omega,1} = \bigcup_{n=1}^{\floor{\mathfrak{b}J}} \mathcal{N}^\omega_{\rho}(n) \subset \bigcup_{n=1}^{\floor{\mathfrak{b}J}} \mathcal{N}^{\tilde\omega}_{s_{p(n)} \rho}(2^{p(n)} n) \subset \bigcup_{n'=\roof{\mathfrak{b}J}}^{2 \mathfrak{b}J} \mathcal{N}^{\tilde\omega}_{\rho'}(n').
$$
Therefore to estimate the measure of $\mathcal{V}_\rho^1$ it suffices to find a bound for
$\mu^{\hat{\omega}}(\mathcal{N}^{\tilde\omega}_{\rho'}(n'))$ when $n' \geq \mathfrak{b}J$. This is accomplished by using
an argument analogous to the first part of the proof. 
 Notice that since $\tilde{\omega}=(\omega_0\dots\omega_{k-1})^{2^p-1}\omega $ we get that $$
 T^{n'}_{\tilde{\omega}} = \left(T_\omega^n\right)^{2^p}.
 $$
Define 
$\omega' = \omega_0\dots\omega_{k-1} \tilde\omega = (\omega_0\dots\omega_{k-1})^{2^p}\omega$
and 
$\hat{\omega}' = (\omega_0\dots\omega_{k-1})^{2^p}\hat{\omega}$.
Notice that $\theta^{2^pn}\omega' = \theta^{(2^p-1)n}\tilde{\omega} = \omega$, we get 
$$
T^{n'}_{\omega'} =  \left(T_\omega^n\right)^{2^p}=T^{n'}_{\tilde{\omega}}.
$$
As a result,
$$
\mathcal{N}^{\tilde\omega}_{\rho'}(n') = \{x: B_{\rho'} \cap T^{n'}_{\tilde{\omega}}B_\rho \neq \emptyset\}=\{x: B_{\rho'} \cap T^{n'}_{\omega'}B_\rho \neq \emptyset\} = \mathcal{N}^{\omega'}_{\rho'}(n').
$$
Similarly to the part~(I),
\begin{eqnarray*}
\mu^{\hat{\omega}}(\mathcal{N}^{\tilde\omega}_{\rho'}(n'))
&=&\mu^{\hat{\omega}}(\mathcal{N}^{\omega'}_{\rho'}(n'))\\
&=&\mu^{\hat{\omega}'}_{\gamma^u}(T_{\omega'}^{-n'}\mathcal{N}^{\omega'}_{\rho'}(n') \cap \zeta)\\
&=& \frac{\mu^{\hat{\omega}'}_{\gamma^u}(T_{\omega'}^{-n'}\mathcal{N}^{\omega'}_{\rho'}(n') \cap \zeta)}
{\mu^{\hat{\omega}'}_{\gamma^u}(\zeta)} \, \mu_{\gamma^u}^{\hat{\omega}'}(\zeta)\notag\\
&\le& \Theta(n')  \,\frac{\mu^{\hat\omega}_{\hat\gamma^u}(T_{\omega'}^{n'}(T_{\omega'}^{-n'}\mathcal{N}^{\omega'}_{\rho'}(n') \cap \zeta))}{\mu^{\hat\omega}_{\hat\gamma^u}(T_{\omega'}^{n'}\zeta)} \,
\mu_{\gamma^u}^{\hat{\omega}'}(\zeta).
\end{eqnarray*}
To estimate the measure of the numerator we follow the proof of Proposition~\ref{prop.short.returns} and replace all the $n$ with $n'$
 and $\rho$ with $\rho'$.  We get for $\mathfrak{b}< 1/3$
$$
\diam(T_{\omega'}^{n'}(T_{\omega'}^{-n'}\mathcal{N}^{\omega'}_{\rho'}(n') \cap \zeta))
\leq c_1(e^{-\mathfrak{c}'\abs{\log\rho'}^{1/2}}+\delta(n')).
$$
Therefore
$$
\mu^{\hat{\omega}}(\mathcal{N}^{\tilde\omega}_{\rho'}(n'))
 \leq c_5\Theta(n')(e^{-{u_0}\abs{\log \rho'}^{1/2}}  +\delta(n')^{u_0}).
$$
Since $\rho'=s_p\rho$ and $\mathfrak{b}<\mathfrak{a}=\frac{1}{4\log A}$, we have
\begin{eqnarray*}
\rho'&\leq&A^{2n2^p}\rho=A^{2n'}\rho\\
&\leq&A^{4\mathfrak{b}J}=A^{4\mathfrak{ab}\abs{\log \rho}}\rho\\
&\leq&A^{\frac{4}{16\log A}\abs{\log \rho}}\rho=\rho^{3/4}
\end{eqnarray*}
which gives us
$$
\mu^{\hat{\omega}}(\mathcal{N}^{\tilde\omega}_{\rho'}(n'))
 \leq c_5\Theta(n')(e^{-{u_0}\abs{\log \rho^{3/4}}^{1/2}}  +\delta(n')^{u_0}).
$$
Thus, we obtain an estimate similar to~\eqref{Th2refPt2}:
$$
\mu^{\hat{\omega}}(\mathcal{V}_\rho^{\omega,1}) 
\leq \sum_{n'=\roof{\mathfrak{b}J}}^{2 \mathfrak{b}J} \mu^{\hat{\omega}}(\mathcal{N}^{\tilde\omega}_{\rho'}(n'))
 \le c_8(e^{-\mathfrak{c}\abs{\log \rho}^{1/2}}  
 +\delta(\mathfrak{ab}\abs{\log\rho})^{u_1}|\log\rho|^{\kappa'}).
$$
for some $\mathfrak{c}\in(0,\frac{3}{4}u_0)$.

\vspace{3mm}

\noindent {\bf (III) Final estimate}

\vspace{1mm}

\noindent Overall we obtain for all $\rho$ sufficiently small
$$
\mu^{\hat{\omega}}(\mathcal{V}^\omega_\rho) 
\leq \mu^{\hat{\omega}}(\mathcal{V}_\rho^{\omega,1})+\mu^{\hat{\omega}}(\mathcal{V}_\rho^{\omega,2})
\le C_{3}\, (e^{-\mathfrak{c}\abs{\log \rho}^{1/2}}  
+\delta(\mathfrak{ab}\abs{\log\rho})^{u_1}|\log\rho|^{\kappa'}),
$$
for some $C_2$.
\end{proof}

\subsection{Estimate of $\mu^\omega(Y^\omega)$}
Now we are in a position to estimate the $\mu^\omega$-measure of the function $Y^\omega_{\mathsf{x},\rho,t}$.
 
\begin{lem}\label{integral.very.short.returns}
Put $\gamma= \kappa u_0-2-\kappa'$. Then for any $\gamma'\in(0,\gamma)$  there exists a set 
$\mathcal{B}^\omega_\rho$ so that
 $$
 \mu^\omega(Y^\omega_{\mathsf{x},\rho,t})\le \abs{\log\rho}^{-\gamma'}.
 $$
 for all $\mathsf{x}\not\in\mathcal{B}^\omega_\rho$ and
$$
\mu^\omega(\mathcal{B}_\rho^\omega)\lesssim\abs{\log\rho}^{-(\gamma-\gamma')}.
$$
 \end{lem}

\begin{proof} In order to estimate the term $\mu^\omega(Y^\omega_{\mathsf{x},\rho})$ let us put 
$N_\rho(\mathsf{x})=\floor{t/\mu(\ball(\mathsf{x}))}$.
Observe that if $\ball(\mathsf{x})\cap\{\tau_{\ball(\mathsf{x})}^{\theta^j\omega}<J\}\not=\varnothing$ then
there exist $y\in\ball(\mathsf{x})$ and $k<j$ so that $T_{\theta^j\omega}^ky\in\ball(\mathsf{x})$ which implies
 that $\ball(\mathsf{x})\cap T_{\theta^j\omega}^k\ball(\mathsf{x})\not=\varnothing$ and therefore
 $\mathsf{x}\in\mathcal{V}_\rho^{\theta^j\omega}$. 
 Hence  $\mathsf{x}\not\in\mathcal{V}_\rho^{\theta^j\omega}$ implies
 $\ball(\mathsf{x})\cap\{\tau_{\ball(\mathsf{x})}^{\theta^j\omega}<J\}=\varnothing$.

Put $W^\omega_\rho(\mathsf{x})
=\sum_{j=1}^{N_{\rho}(\mathsf{x})}\ind_{\mathcal{V}^{\theta^j\omega}_{\rho}}(\mathsf{x})$
and $q^\omega_\rho(\mathsf{x})=\frac{W^\omega_\rho(\mathsf{x})}{N_{\rho}(\mathsf{x})}$.
Let $M_k=\{\mathsf{x}\in M: N_{\rho}(\mathsf{x})=k\}$ and 
put $a_{j,k}=\mu^\omega(\mathcal{V}^{\theta^j\omega}_\rho\cap M_k)$. Observe that, by Assumption~(V) , $\sup_{\mathsf{x}}N^\omega_{\rho}(\mathsf{x})$ is bounded above by $\hat{N}=c_1t\rho^{-d_1}$ for some constant $c_1$.
Then 
$$
Q^\omega_\rho
:=\int_Mq^\omega_\rho(\mathsf{x})\,d\mu^\omega(\mathsf{x})
=\sum_{k=1}^{\hat{N}}\frac1k\sum_{j=1}^ka_{j,k}
\le\sum_{j=1}^{\hat{N}}\frac1{j}\sum_{k=1}^{{\hat{N}}}a_{j,k}.
$$

Since by Proposition~\ref{prop.short.returns} $\mu^\omega(\mathcal{V}^{\theta^j\omega}_\rho)=\sum_{k=1}^{{\hat{N}}}a_{j,k}\lesssim|\log\rho|^{-\gamma-1}$,
where $\gamma=\kappa u_0-2-\kappa'$, we thus obtain
$$
Q^\omega_\rho
\lesssim|\log\rho|^{-\gamma-1}\sum_{j=1}^{\hat{N}}\frac1j
\lesssim|\log\rho|^{-\gamma}.
$$
Now define
$$
\mathcal{B}^\omega_\rho=\left\{\mathsf{x}\in M:q^\omega_\rho(\mathsf{x})>|\log\rho|^{-\gamma'}\right\}
$$
for  $\gamma'\in(0,\gamma)$.
By Markov's inequality:
$$
\mu^\omega(\mathcal{B}^\omega_\rho)\le Q^\omega_\rho|\log\rho|^{\gamma'}
\lesssim|\log\rho|^{-\gamma''},
$$
where $\gamma''=\gamma-\gamma'$.

If $\mathsf{x}\not\in\mathcal{B}^\omega_{\rho}$ then 
$q^\omega_{\rho}(\mathsf{x})\lesssim|\log\rho|^{-\gamma'}$
and $W^\omega_{\rho}(\mathsf{x})\lesssim\frac{N_\rho(\mathsf{x})}{|\log\rho|^{\gamma'}}$.
Consequently there exists an  index set 
$\mathcal{I}^\omega_{\mathsf{x}}\subset\{1,2,\dots,N_\rho(\mathsf{x})\}$
so that $|\mathcal{I}^\omega_{\mathsf{x}}|\lesssim N_\rho(\mathsf{x})|\log\rho|^{-\gamma'}$
and
$$
\begin{cases}
\mathsf{x}\in\mathcal{V}^{\theta^j\omega}_{\rho}&\forall j\in\mathcal{I}^\omega_{\mathsf{x}}\\
\mathsf{x}\not\in\mathcal{V}^{\theta^j\omega}_{\rho}&\forall  
j\in\{1,\dots,N_\rho(\mathsf{x})\}\setminus\mathcal{I}^\omega_{\mathsf{x}}.
\end{cases}
$$
Since $\ball(\mathsf{x})\cap\{\tau^{\theta^j\omega}_{\ball(\mathsf{x})}<J\}=\varnothing$
for all $j\in\{1,\dots,N_\rho(\mathsf{x})\}\setminus\mathcal{I}^\omega_{\mathsf{x}}$
we finally get by Assumption~(V)
\begin{eqnarray*}
\mu^\omega(Y^\omega_{\mathsf{x},\rho})
&\le&\mu^\omega\!\left(\sum_{j\in\mathcal{I}^\omega_{\mathsf{x}}}
\ind_{\ball(\mathsf{x})\cap\{\tau^{\theta^j\omega}_{\ball(\mathsf{x})}<J\}}\circ T_\omega^j\right)\\
&\le&|\mathcal{I}^\omega_{\mathsf{x}}|K\mu(\ball(\mathsf{x}))\\
&\lesssim& q^\omega_{4\rho}(\mathsf{x})N_\rho(\mathsf{x})K\mu(\ball(\mathsf{x}))\\
&\lesssim&\frac{t}{|\log\rho|^{\gamma'}}
\end{eqnarray*}
\end{proof}

\section{Principal part}

In this section we will look at the case when all returns within the observation time $N$
are longer than $J$. In fact we want to show that following result which compares
the hitting times distribution to that of independent random variables.

\begin{prop}\label{main.proposition}
Under the assumptions of Theorem~\ref{main.theorem} put $u_1=u_0$ if
$\delta$ decays superpolynomially and $u_1=u_0-\frac1\kappa$ if 
$\delta(n)=\mathcal{O}(n^{-\kappa})$.

Then there exist a positive $\epsilon$
and a constant $C_3$ so that 
$$
\left|\mu^\omega(\tau^\omega_{\ball}>N)-\prod_{j=1}^N(1-\mu^{\theta^j\omega}(\ball))\right|
\le C_3\!\left(\rho^\epsilon+(\delta(J)^{u_1}+\rho^\epsilon)\sum_{j=1}^N\mu^{\theta^j\omega}(\ball)\right)
+\mu^\omega(Y^\omega_{\mathsf{x},\rho,t})
$$
for all balls $\ball$.
\end{prop}

\begin{proof}
We proceed as in~\cite{RSV} and note that
$$
\left|\mu^\omega(\tau^\omega_{\ball}>N)-\prod_{j=1}^N(1-\mu^{\theta^j\omega}(\ball))\right|
\le\sum_{j=1}^N\epsilon_{\theta^j\omega}(\ball)\prod_{k=1}^{j-1}(1-\mu^{\theta^k\omega}(\ball))
$$
where 
\begin{equation}\label{defeps}
\epsilon_{\omega}(\ball)
=\sup_{k\ge 1} \left|\mu^\omega(\tau_{\ball}^\omega>k)\,\mu^\omega(\ball) - \mu^\omega(\ball\cap \{\tau_{\ball}^\omega>k\})\right|.
\end{equation}

We now split the error term on the RHS into three parts using the 
fact that 
\begin{eqnarray*}
\epsilon_{\theta^j\omega}
&\le&  \sup_{k\ge1}\left|\mu^{\theta^j\omega}(\ball \cap T_{\theta^j\omega}^{-\Delta}\{\tau_{\ball}^{\theta^{j+\Delta}\omega}\ge k\}) 
- \mu^{\theta^j\omega}(\ball) \, \mu^{\theta^{j+\Delta}\omega}(\{\tau_{\ball}^{\theta^{j+\Delta}\omega}\ge k\})\right|\\
&&\hspace{4cm}+\mu^{\theta^j\omega}(\ball\cap\{\tau^{\theta^j\omega}_{\ball}\le\Delta\})
+\mu^{\theta^j\omega}(\ball)\mu^{\theta^j\omega}(\tau^{\theta^j\omega}_{\ball}\le\Delta).
\end{eqnarray*}
Thus
\begin{equation}\label{estsumeps}
\left|\mu^\omega(\tau^\omega_{\ball}>N)-\prod_{j=1}^N(1-\mu^{\theta^j\omega}(\ball))\right|
\le\sum_{j=1}^N\epsilon_{\theta^j\omega}(\ball)
=\mathcal{R}
= \mathcal{R}_1+\mathcal{R}_2+\mathcal{R}_3
\end{equation}
where 
\begin{eqnarray*}
\mathcal{R}_1&=&\sum_{j=1}^N \sup_{k\ge1}\left|\mu^{\theta^j\omega}(\ball \cap T_{\theta^j\omega}^{-\Delta}\{\tau_{\ball}^{\theta^{j+\Delta}\omega}\ge k\}) 
- \mu^{\theta^j\omega}(\ball) \, \mu^{\theta^{j+\Delta}\omega}(\{\tau_{\ball}^{\theta^{j+\Delta}\omega}\ge k\})\right|\\
\mathcal{R}_2&=&\sum_{j=1}^N\mu^{\theta^j\omega}(\ball\cap\{\tau^{\theta^j\omega}_{\ball}\le\Delta\})
\\
\mathcal{R}_3&=&\sum_{j=1}^N\mu^{\theta^j\omega}(\ball)\mu^{\theta^j\omega}(\tau^{\theta^j\omega}_{\ball}\le\Delta).
\end{eqnarray*}

We now estimate the three terms individually.

\subsection{Estimating $\mathcal{R}_1$} \label{estimate.r1}

We estimate the principal term by
$$
\mathcal{R}_1 =N \sup_\omega \sup_{k\ge1}\left| \mu^\omega(\ball \cap T_\omega^{-\Delta}S_k) 
- \mu^\omega(\ball) \, \mu^{\theta^{\Delta}\omega}(S_k) \right|.
$$
where we put $S_k=S_k(\Delta)=\{y:\tau_{\ball}^{\theta^{\Delta}\omega}(y)\ge k\}$.
We now use the decay of correlations from Assumption~(II) to obtain an estimate. 
Approximate $\ind_{\ball}$ by Lipschitz functions
from above and below as follows:
$$
\phi(x) =
\begin{cases}
1 & \text{on $\ball$} \\
0 & \text{outside $B_{\rho + \delta \rho}$}
\end{cases}
\hspace{0.7cm} \text{and} \hspace{0.7cm}
\tilde{\phi}(x) =
\begin{cases}
1 & \text{on $B_{\rho - \delta \rho}$} \\
0 & \text{outside $\ball$}
\end{cases}
$$
with both functions linear within the annuli. The Lipschitz norms of both $\phi$ and $\tilde{\phi}$ are equal to $1/\delta\rho$ and $\tilde{\phi} \leq \ind_{\ball} \leq \phi$.
We obtain
\begin{align*}
&\mu^\omega(\ball \cap T_\omega^{-\Delta}S_k) 
- \mu^\omega(\ball) \, \mu^{\theta^{\Delta}\omega}(S_k)\hspace{-0cm}\\
& \leq \int_M \phi (\ind_{S_k}\circ T_\omega^{\Delta}) \, d\mu^\omega 
- \int_M \ind_{\ball} \, d\mu^\omega \, \int_M \ind_{S_k} \, d\mu^{\theta^{\Delta}\omega} \\[0.2cm]
& =X+Y
\end{align*}
where
\begin{align*}
X&=\left(\int_M \phi \, d\mu^\omega - \int_M \ind_{\ball} \, d\mu^\omega \right)
 \int_M \ind_{S_k} \, d\mu^{\theta^\Delta\omega}\\
Y&=\int_M \phi \; (\ind_{S_k}\circ T_\omega^\Delta ) \, d\mu^\omega - \int_M \phi \, d\mu^\omega \, \int_M \ind_{S_k} \, d\mu^{\theta^\Delta\omega} .
\end{align*}
The two terms $X$ and $Y$ are estimated separately.
The first term is estimated as follows:
$$
X \leq\int_M \ind_{S_k} \, d\mu^{\theta^\Delta\omega} \, \int_M (\phi - \ind_{\ball}) \, d\mu^\omega
 \leq \mu^\omega(B_{\rho + \delta \rho} \setminus \ball).
$$
In order to estimate the second term $Y$ we use the decay of correlations and
have to approximate $\ind_{S_k}$ by a function which is constant on local stable leaves.
For that purpose put
$$
\mathcal{S}_n
=\bigcup_{\substack{\gamma^s\\T_\omega^n\gamma^s\subset B_\rho}}T_\omega^n\gamma^s,
\hspace{6mm}
\partial\mathcal{S}_n
=\bigcup_{\substack{\gamma^s\\T_\omega^n\gamma^s\cap B_\rho\not=\varnothing}}T_\omega^n\gamma^s
$$
and
$$
\mathscr{S}_\Delta^{N-j}=\bigcup_{n=\Delta}^{N-j}\mathcal{S}_n,
\hspace{6mm}
\partial\mathscr{S}_\Delta^{n-j}=\bigcup_{n=\Delta}^{N-j}\partial\mathcal{S}_n.
$$
The set
$$
\mathscr{S}_\Delta^{N-j}(k)=S_k\cap\mathscr{S}_\Delta^{N-j}
$$
is then a union of local stable leaves. This follows from the fact that by construction
$T^n y\in B_\rho$ if and only if $T^n\gamma^s(y)\subset B_\rho$.
We also have
$S_k\subset\tilde{\mathscr{S}}_\Delta^{N-j}(k)$
where the set $\tilde{\mathscr{S}}_\Delta^{N-j}(k)=\mathscr{S}_\Delta^{N-j}(k)\cup\partial\mathscr{S}_\Delta^{N-j}$
is a union of local stable leaves.

Denote by  $\psi_\Delta^{N-j}$ the indicator function of $\mathscr{S}_\Delta^{N-j}(k)$
and by $\tilde\psi_\Delta^{N-j}$ the indicator function of
$\tilde{\mathscr{S}}_\Delta^{N-j}(k)$. Then $\psi_\Delta^{N-j}$ and $\tilde\psi_\Delta^{N-j}$
are constant on local stable leaves and satisfy
$$
\psi_\Delta^{N-j}\le\ind_{S_k}\le\tilde\psi_\Delta^{N-j}.
$$
Since $\{y:\psi_\Delta^{N-j}(y)\not=\tilde\psi_\Delta^{N-j}(y)\}\subset\partial\mathscr{S}_\Delta^{N-j}$
we need to estimate the measure of $\partial\mathscr{S}_\Delta^{N-j}$.

 By the contraction property
  $\mbox{diam}(T_\omega^n\gamma^s(y))\le\delta(n)$ and consequently
  $$
  \bigcup_{\substack{\gamma^s\\T_\omega^n\gamma^s\subset B_\rho}}T_\omega^n\gamma^s
  \subset B_{\rho+\delta(n)}\setminus B_{\rho-\delta(n)}
  $$
and therefore
$$
\mu^\omega(\partial\mathscr{S}_\Delta^{N-j})
\le\mu^\omega\left(\bigcup_{n=\Delta}^{N-j}T_\omega^{-n}\left(B_{\rho+\delta(n)}\setminus B_{\rho-\delta(n)}\right)\right)
\le\sum_{n=\Delta}^{N-j}\mu^{\theta^n\omega}(B_{\rho+\delta(n)}\setminus B_{\rho-\delta(n)}).
$$
Hence, by assumption~(VI), using  $r=2\delta(n)=\mathcal{O}(n^{-\kappa})$  if 
$\delta$ decays polynomially with power $\kappa$:
\begin{eqnarray*}
\sum_{n=\Delta}^{N-j}\mu^\omega(\partial\mathscr{S}_\Delta^{N-j}) 
&=& \mathcal{O}(1) \sum_{n=\Delta}^{\infty}\frac{n^{-\kappa\xi}}{\rho^{d_1\beta}}\mu(\ball)\\
&=& \mathcal{O}(\rho^{v(\kappa\xi-1) -d_1\beta}\mu(\ball)) 
\end{eqnarray*}
provided  $\Delta\sim\rho^{-v}$ for some positive $v > \frac{\beta-d_0}{\kappa\xi-1}$ which is determined in Section~\ref{estimate.r}  below. 
If we split $\Delta=\Delta'+\Delta''$
then we can estimate $Y$ as follows:
\begin{align*}
Y&=\left|  \int_M \phi \; T_\omega^{-\Delta'}(\ind_{S_k(\Delta')} ) \, d\mu^\omega
- \int_M \phi \, d\mu^\omega \, \int_M \ind_{S_k(\Delta)} \, d\mu^{\theta^\Delta\omega}\right|\\
&
\le \lambda(\Delta')\|\phi\|_{Lip}\|\ind_{\tilde{\mathscr{S}}_{\Delta''}^{N-j-p'}}\|_{\mathscr{L}^\infty}
+2\mu^\omega(\partial\mathscr{S}_{\Delta''}^{N-j}).
  \end{align*}
Hence
$$
\mu^\omega(\ball \cap T^{-\Delta} S_k) - \mu^\omega(\ball) \, \mu^{\theta^\Delta\omega}(S_k)
\leq \frac{\lambda(\Delta/2)}{\delta \rho} + \mu^\omega(\ball \setminus B_{\rho - \delta \rho})
+\mathcal{O}(\rho^{v(\kappa\xi-1) -d_1\beta}\mu(\ball)).
$$
A similar estimate from below can be done using $\tilde\phi$. Hence 
\begin{equation} \label{R1est}
\mathcal{R}_1
\leq Nc_1\left(\frac{\lambda(\Delta/2)}{\delta\rho}
+\sup_\omega\mu^\omega(B_{\rho +\delta \rho} \setminus B_{\rho -\delta \rho})\right)
+\mathcal{O}(\rho^{v(\kappa\xi-1) -d_1\beta}).
\end{equation}

\subsection{Estimating the  terms  $\mathcal{R}_2$}\label{estimate.r2}

We will estimate the measure of each of the summands comprising $\mathcal{R}_2$ individually.
We use the product form of the measures
 $\mu^\omega$.
For that purpose fix $j$ and and let $\gamma^u$ be an unstable local leaf through $B$.
Then we put 
$$
\mathscr{C}_j^\omega(B,\gamma^u)=\{\zeta_{\varphi,j}: \zeta_{\varphi,j}\cap B\not=\varnothing,\varphi\in \mathscr{I}_j^\omega\}
$$
 for the cluster of $j$-cylinders that covers the set $B$,
 where the sets $\zeta_{\varphi,k}$ are the images of imbedded $R$-balls in 
 $T_\omega^j\gamma^u$.
Then, using the distortion property~(III),
\begin{eqnarray*}
\mu^\omega_{\gamma^u}(T_\omega^{-j}\ball\cap \ball)
&\le&\sum_{\zeta\in\mathscr{C}_j^\omega(\ball,\gamma^u)}\frac{\mu^\omega_{\gamma^u}(T_\omega^{-j}\ball\cap \zeta)}{\mu^\omega_{\gamma^u}(\zeta)}\mu^\omega_{\gamma^u}(\zeta)\\
&\le&\sum_{\zeta\in \mathscr{C}_j^\omega(\ball,\gamma^u)}\Theta(j)
\frac{\mu^{\theta^{j}\omega}_{T_\omega^j\gamma^u}(\ball\cap T_\omega^j\zeta)}
{\mu^{\theta^{j}\omega}_{T_\omega^j\gamma^u}(T_\omega^j\zeta)}\mu^\omega_{\gamma^u}(\zeta)
\end{eqnarray*}

Since $\mu^{\theta^{j}\omega}_{T_\omega^j\gamma^u}(T_\omega^j\zeta)=\mu^{\theta^{j}\omega}_{T_\omega^j\gamma^u}(B_{R,\gamma^u}(y_k))$
 (for some $y_k$) is uniformly bounded from below, we obtain  
\begin{eqnarray*}
\mu^\omega_{\gamma^u}(T_\omega^{-j}\ball\cap\ball)
&\le& \Theta(j)\mu^{\theta^{j}\omega}_{T_\omega^j\gamma^u}(\ball)
\sum_{\zeta\in \mathscr{C}_j^\omega(\ball,\gamma^u)}\mu^\omega_{\gamma^u}(\zeta)\\
&\le& \Theta(j)\mu^{\theta^{j}\omega}_{T_\omega^j\gamma^u}(\ball)\,  
L\,\mu^\omega_{\gamma^u}\!\left(\bigcup_{\zeta\in \mathscr{C}_j^\omega(\ball,\gamma^u)}\zeta\right)
\end{eqnarray*}
Now, since $\diam\bigcup_{\zeta\in \mathscr{C}_j^\omega(\ball,\gamma^u)}\zeta
\le \delta(j)+\diam \ball\le c_1\delta(j)$ 
(as we can assume that $\rho<\delta(j)$) we obtain
$$
\mu^\omega_{\gamma^u}(T_\omega^{-j}\ball\cap \ball)
\le c_3\Theta(j)\mu^{\theta^{j}\omega}_{T_\omega^j\gamma^u}(\ball)\delta(j)^{u_0}.
$$
Since $d\mu^\omega=d\mu^\omega_{\gamma^u}d\upsilon^\omega(\gamma^u)$
we obtain
$$
\mu^\omega(T_\omega^{-j}\ball\cap \ball)
\le c_4\Theta(j)\mu^{\theta^{j}\omega}(\ball)\delta(j)^{u_0}.
$$
Summing up the $\mu^\omega(T_\omega^{-j}\ball\cap \ball)$  over $j=J,\dots,\Delta-1$, we get
\begin{eqnarray} 
\mathcal{R}'_2(\omega)
&=&\mu^\omega(\ball\cap T_\omega^{-J}\{\tau^{\theta^J\omega}_{\ball}<\Delta-J\})
+\mu^\omega(\ball\cap \{\tau^{\omega}_{\ball}<J\})\notag\\
&\le&\sum_{j=J}^{\Delta-1} \mu^\omega(T_\omega^{-j}\ball\cap \ball)
+\mu^\omega(\ball\cap \{\tau^{\omega}_{\ball}<J\})\notag\\
&\le& c_4\sum_{j=J}^{\Delta-1} \Theta(j)\delta(j)^{u_0}\mu^{\theta^{j}\omega}(\ball)
+\mu^\omega(\ball\cap \{\tau^{\omega}_{\ball}<J\}).
\label{R2 summand ctd3}
\end{eqnarray}
For the entire error term we thus obtain
\begin{eqnarray*}
\mathcal{R}_2
&=&\sum_{k=1}^N\mathcal{R}'_2(\theta^k\omega)\\
&\le& c_4\sum_{j=J}^\Delta\Theta(j)\delta(j)^{u_0}\sum_{k=1}^N\mu^{\theta^{k+j}\omega}(\ball)
+\sum_{k=1}^N\mu^{\theta^k\omega}(\ball\cap \{\tau^{\theta^k\omega}_{\ball}<J\})\\
&\le &c_6t\delta(J)^{u_1}J^{\kappa'}\sum_{k=1}^N\mu^{\theta^{k}\omega}(\ball)
+\mu^\omega(Y^\omega_{\mathsf{x},\rho,t})
\end{eqnarray*}
for some $c_5, c_6$ and almost every $\omega$ and $\rho$ small enough (depending on 
$\omega$). The exponent $u_1$ equals $u_0$ if $\delta(j)$ decays super polynomially
and equals $u_0-\frac1\kappa$ if $\delta(j)$ decays polynomially with power $\kappa$.

\subsection{Estimating the  terms  $\mathcal{R}_3$}\label{estimate.r3}
Assumption~(V) yields 
$$
\mu^\omega(\ball) = \int\mu^\omega_{\gamma^u}(\ball)\,d\upsilon^\omega(\gamma^u) \le\int C_1\rho^{u_0}\,d\upsilon^\omega(\gamma^u)= C_1\rho^{u_0}
$$
for every $\omega$. Since 
$\mu^{\theta^{j}\omega}(\tau_{\ball}^{\theta^j\omega}\le\Delta)\le \sum\limits_{k=1}^{\Delta}\mu^{\theta^{j+k}\omega}(\ball)$
we obtain by Assumption~(V)

\begin{eqnarray*}
\mathcal{R}_3&=&\sum_{j=1}^N\mu^{\theta^j\omega}(\ball)\mu^{\theta^{j}\omega}(\tau_{\ball}^{\theta^j\omega}\le\Delta)\\
&\le& C_1\rho^{u_0}\sum_{j=1}^N\sum\limits_{k=1}^{\Delta}\mu^{\theta^{j+k}\omega}(\ball)\\
&\le & C_1\rho^{u_0} \Delta \sum_{j=1}^N \mu^{\theta^{j}\omega}(\ball) + C_1\rho^{u_0}   \sum_{j=1}^{\Delta} (\Delta-j) \mu^{\theta^{N+j}\omega}(\ball)\\
&\le &  c_7\rho^{u_0} \Delta\sum_{j=1}^N \mu^{\theta^{j}\omega}(\ball)+ c_7 (\Delta\rho^{u_0})^2
\end{eqnarray*}
for some $c_7$ for almost all $\omega$ and $\rho$ small enough
since the first sum converges $\nu$-almost everywhere to $t$.

\subsection{The total error}\label{estimate.r} The total error is 
\begin{eqnarray*}
\mathcal{R}&=&\mathcal{R}_1+\mathcal{R}_2+\mathcal{R}_3\\
&\le &Nc_1\left(\frac{\lambda(\Delta)}{\delta\rho}
+\sup_\omega\mu^\omega(B_{\rho +\delta \rho} \setminus B_{\rho -\delta \rho})\right)
+\mathcal{O}(\rho^{v(\kappa\xi-1)-d_1\beta})\\
&&\hspace{2cm}+\left(c_6\delta(J)^{u_1}J^{\kappa'}+C_1\rho^{u_0} \Delta\right)
\sum_{j=1}^N \mu^{\theta^{j}\omega}(\ball)
+c_7 (\Delta\rho^{u_0})^2
+\mu^\omega(Y^\omega_{\mathsf{x},\rho,t}).
\end{eqnarray*}
Let us consider the case when $\lambda$ decays polynomially with power $p$,
i.e.\ $\lambda(k)\sim k^{-p}$. We can choose $\Delta = \rho^{-v}$ so that 
$\lambda(\Delta)=\mathcal{O}(\rho^{-vp})=\mathcal{O}(\rho^w\mu(\ball))\rho^{-(vp-w-d-1)}$  and so that then for some $\epsilon>0$:
$$
\Delta\rho^{u_0}\le c_8\rho^{u_0-\frac{w}p}\mu(\ball)^{-\frac1p}<c_9\rho^{u_0-\frac{w+d_1}p}
=\mathcal{O}(\rho^\epsilon),
$$
that is $ \frac{d_1+w}{p}<v<\min\{1,u_0\}$. 
We then obtain with $\delta\rho=\rho^w$ that 
$N\frac{\lambda(\Delta)}{\delta\rho}\le c_{10}\frac1{\mu(\ball)}\frac{\Delta^{-p}}{\rho^w}
=\mathcal{O}(\rho^{-(vp-w-d-1)})$.
The second term is estimated by (maybe some smaller $\epsilon>0$)
$$
\sup_\omega\mu^\omega(B_{\rho +\delta \rho} \setminus B_{\rho -\delta \rho}) 
= \mathcal{O}(\frac{\rho^{w\xi}}{\rho^\beta}) = \mathcal{O}(\rho^{w\xi-\beta})
=\mathcal{O}(\rho^\epsilon)
$$
since $w\xi>\beta$. Hence we need constants $w,v>0$ such that the following inequalities hold:
\begin{enumerate}
\item $\frac{d_1+w}{p} < v < \min\{1,u_0\};$
\item $v(\kappa\xi-1)-d_1\beta > 0$ from Section~\ref{estimate.r1}; and
\item $w<\frac\xi\beta$ ($w$ can be arbitrarily close to $\frac\xi\beta$).
\end{enumerate}
These conditions hold if we require that 
$\max\{\frac{d_1\beta}{\kappa\xi-1}, \left(\frac{\beta}{\xi}+d_1\right)\frac{1}{p} \} < \min\{1,u_0\}$
which in particular implies $\rho^{v(\kappa\xi-1)-d_1\beta}=\mathcal{O}(\rho^\epsilon)$.
We therefore obtain
$$
\mathcal{R}\le\mathcal{O}(\rho^\epsilon)
+\mathcal{O}(J^{\kappa'}\delta(J)^{u_1}+\rho^\epsilon)\sum_{j=1}^N \mu^{\theta^{j}\omega}(\ball)
+\mu^\omega(Y^\omega_{\mathsf{x},\rho,t})
$$
for all $\rho$ small enough and every $\mathsf{x}$.
\end{proof}

\section{Proof of the main theorems for hitting times}

 \begin{proof}[Proof of Theorem~\ref{main.theorem} and Theorem~\ref{main.theorem.polynomial} for hitting times]
According to~\cite{RSV} Lemma~14 the variance (as a function of $\omega$) of 
$$
\mu^\omega(Z^{\omega}_{\mathsf{x},\rho,t})=\sum_{j=1}^N\mu^{\theta^j\omega}(\ball)
$$
 is bounded by $\rho^q$ for some $0<q<\frac{(p-1)d_0\xi - d_1\xi - \beta}{p\xi+1}$
 and for all $\mathsf{x}$.
 Hence we obtain along any sequence $\rho_i$ for which $\sum_{i=1}^\infty\rho_i^q<\infty$ 
 by an application of Chebycheff's inequality and the  Borel-Cantelli lemma that
$$
\mu^\omega(Z^{\omega}_{\mathsf{x},\rho_i,t})\to t\qquad\mbox{ as } i\to\infty
$$
for $\nu$-almost every $\omega$
since $\mu(Z^{\omega}_{\mathsf{x},\rho,t})=t$ for all $\rho>0$. Since
\begin{eqnarray*}
\prod_{j=1}^N(1-\mu^{\theta^j\omega}(\ball))
&=&\exp\sum_{j=1}^N\left(\mu^{\theta^j\omega}(\ball)+\mathcal{O}(\mu^{\theta^j\omega}(\ball))^2\right)\\
&=&\exp\sum_{j=1}^N\mu^{\theta^j\omega}(\ball)\!\left(1
+\mathcal{O}(\max_j\mu^{\theta^j\omega}(\ball))\right)\\
&\longrightarrow&e^{-t}
\end{eqnarray*}
as $\rho\to0$ along a sequence $\rho_i$, 
since $\max_j\mu^{\theta^j\omega}(\ball)\le C_1\rho^{u_0}\to 0$.

Thus, our theorems are proven along a sequence $\rho_i$ provided we prove that the last
term on the right hand side 
of the inequality in Proposition~\ref{main.proposition} goes to zero as $\rho_i$.
That is we must show that for 
almost every $x$ and for almost every $\omega$ the quantity $\mu^\omega(Y^\omega_{\mathsf{x},\rho_i})$
 goes to zero. For this purpose, we use Lemma~\ref{integral.very.short.returns} and let 
 $\gamma''\in(0,\gamma)$ which can be chosen arbitrarily close to $\gamma$.
 Let $\mathcal{B}^\omega_{\rho_i}$ be the set given by Lemma~\ref{integral.very.short.returns}.
 Then $\mu^\omega(\mathcal{B}^\omega_\rho)\lesssim \abs{\log\rho}^{-\gamma''}$
 and if we let  $\alpha\in(\frac{1}{\gamma''},1)$ then we put $\rho_i=e^{-i^\alpha}$ for all  $i\in\mathbb{N}$
 large enough.
These choices imply that $\sum_{i=1}^\infty\rho_i^q<\infty$. Since
   $$
   \mu^\omega(\mathcal{B}^\omega_{\rho_i})\le c_1i^{-\alpha\gamma''}
   $$
we thus obtain $\sum_i \mu^\omega(\mathcal{B}^\omega_{\rho_i})<\infty$ (as $\alpha\gamma''>1$). 
By the Borel-Cantelli lemma we now conclude that
    $\mu^\omega(x\in \mathcal{B}^\omega_{\rho_i} \mbox{ i.o.})=0$ which implies that
 $\mu^\omega(Y^\omega_{\mathsf{x},\rho_i,t})\lesssim \abs{\log\rho_i}^{-(\gamma-\gamma'')}\longrightarrow0$
 as $i\to\infty$ for almost every $\mathsf{x}$ and $\omega$.

    This concludes the proof of  the two main theorems for hitting times along the sequence  $\rho_i$.

    In order to get the convergence for arbitrary $\rho\to0$ 
   let $\rho>0$ be sufficiently small and $i$ so that $\rho_i\le\rho\le\rho_{i-1}$. We have $r_i=\rho_{i-1}-\rho_i=\rho_i.\mathcal{O}(i^{-(1-\alpha)})$,  then
\begin{eqnarray*}
& & \left|\mu^\omega\!\left(\tau^\omega_{\ball}>\frac{t}{\mu(\ball)}\right)
-\mu^\omega\!\left(\tau^\omega_{B_{\rho_i}}>\frac{t}{\mu(B_{\rho_i})}\right)\right|\\
&\leq& \mu^\omega\!\left(\tau^\omega_{\ball\setminus B_{\rho_i}}<\frac{t}{\mu(\ball)}\right)
+\mu^\omega\!\left(\frac{t}{\mu(B_{\rho})}<\tau^\omega_{B_{\rho_i}}<\frac{t}{\mu(B_{\rho_i})}\right)\\
&\le &\sum_{j=1}^{\frac{t}{\mu(B_\rho)}}\mu^{\theta^j\omega}(\ball\setminus B_{\rho_i})
+\sum_{j=\frac{t}{\mu(\ball)}}^{\frac{t}{\mu(B_{\rho_i})}}\mu^{\theta^j\omega}(B_{\rho_i})\\
&\leq&{\mu(B_\rho)}\sum_{j=1}^{\frac{t}{\mu(B_\rho)}}\frac{\mu^{\theta^j\omega}(B_{\rho+r_i}\setminus B_{\rho-r_i})}{\mu(B_\rho)}
+K\mu(B_{\rho_i})\left|\frac{t}{\mu(\ball)}-\frac{t}{\mu(B_{\rho_i})}\right|\\
&\lesssim&t\frac{r_i^\xi}{\rho_i^\beta}
+Kt\frac{\mu(\ball\setminus B_{\rho_i})}{\mu(B_{\rho})}\\
&\lesssim&t(1+K)\frac{r_i^\xi}{\rho_i^\beta}\\
&\lesssim&\frac{\rho_i^{\xi-\beta}}{i^{\xi(1-\alpha)}}
\end{eqnarray*}
using Assumption~(VI) and Assumption~(V). This difference goes to zero as $i\to\infty$ since
$\xi\ge\beta$ and $1-\alpha>0$ which concludes the proof of the theorem.
\end{proof}

\section{Return times distribution}\label{return.times.distribution}
\begin{proof}[Proof of Theorem~\ref{main.theorem1} and Theorem~\ref{main.theorem.polynomial} for return times]
Since we proved an exponential distribution for the hitting times, to get an exponential distribution for the return times we will estimate the difference between the hitting time statistics and the return time statistics. To do so, we use the definition of $\epsilon_\omega$ in \eqref{defeps} to notice that
$$
\left|\mu^\omega_{\ball}(\tau^\omega_{\ball}>N)-\mu^\omega(\tau^\omega_{\ball}>N)\right|\leq\frac{\epsilon_\omega(\ball)}{\mu^\omega(\ball)},
$$
Observe that the convergence of the RTS does not come immediately from the first part of the theorem, as one could have hoped for  from the deterministic case (e.g. \cite{Sau09}) or the annealed case \cite{Rou14}.

 In order to estimate the error term we split the RHS in three terms as in~\eqref{estsumeps}:
 $$
 \left|\frac{\epsilon_\omega(\ball)}{\mu^\omega(\ball)}\right|
 =\tilde{\mathcal{R}}\leq \tilde{\mathcal{R}}_1+\tilde{\mathcal{R}}_2+\tilde{\mathcal{R}}_3.
 $$
 We estimate the first term $\tilde{\mathcal{R}}_1$, the decay of correlations term
 (unlike the term $\mathcal{R}_1$ in Theorem~\ref{main.theorem} there is no factor  $N$):
 $$
\tilde{\mathcal{R}}_1 =\frac{1}{\mu^\omega(\ball)} \sup_{k\ge1}\left| \mu^\omega(\ball \cap T_\omega^{-\Delta}S_k) 
- \mu^\omega(\ball) \, \mu^{\theta^{\Delta}\omega}(S_k) \right|.
$$
As in Section~\ref{estimate.r1}, we get:
$$
\tilde{\mathcal{R}}_1
\leq \frac{c_1}{\mu^\omega(\ball)}\left(\frac{\lambda(\Delta/2)}{\delta\rho}
+\mu^\omega(B_{\rho +\delta \rho} \setminus B_{\rho -\delta \rho})\right)
+\frac{\mu(\ball)}{\mu^\omega(\ball)}\mathcal{O}(\rho^{v(\kappa\xi-1)-d_1\beta})
$$
where we use Assumption~(V) to estimate the term $\frac{\mu(\ball)}{\mu^\omega(\ball)}$
by $K$.
The short hitting times term, $\tilde{\mathcal{R}}_3$, can be dealt with easily as in Section \ref{estimate.r3}:
$$
\tilde{\mathcal{R}}_3=\mu^\omega( \{y:\tau^\omega_{\ball}(y) < \Delta\}) 
\leq\sum_{k=1}^{\Delta}\mu^{\theta^k\omega}(\ball)
\leq c_2\Delta\rho^{u_0}.
$$
We are left with the short return times term, $\tilde{\mathcal{R}}_2$ which we
estimate similarly to Section~\ref{estimate.r2}:
$$
\tilde{\mathcal{R}}_2
=\frac{\mathcal{R}'_2(\omega)}{\mu^\omega(\ball)}
= \frac{1}{\mu^\omega(\ball)}\mu^\omega(\ball \cap \{y:\tau^\omega_{\ball}(y) < \Delta\})
\le c_3\sum_{j=J}^\Delta\Theta(j)\delta(j)^{u_0}\frac{\mu^{\theta^j\omega}(\ball)}{\mu^\omega(\ball)}
$$
for all $\mathsf{x}\not\in\mathcal{V}^\omega_{4\rho}$
 (as $\ball(\mathsf{x})\cap\{\tau^\omega_{\ball(\mathsf{x})}<J\}=\varnothing$ 
 for such $\mathsf{x}$).
This implies by Assumption~(V)
$\tilde{\mathcal{R}}_2\le c_4 \delta(J)^{u_1}J^{\kappa'}$.

Consequently, proceeding as in Section~\ref{estimate.r} we finally obtain
$$
\tilde{\mathcal{R}}\le c_5\!\left(\delta(J)^{u_0}J^{\kappa'}+\rho^\epsilon\right)
$$
for some $\epsilon>$ and for all $\mathsf{x}\not\in\mathcal{V}^\omega_{4\rho}$.
\end{proof}

 \subsection{Alternative argument for $\tilde{\mathcal{R}}_2\to0$ almost surely}\label{section.alternative}
Here, we will give an alternative solution to prove that $\tilde{\mathcal{R}}_2\to0$ almost surely.

First, we will estimate the measure of the set 
$\mathcal{W}^\omega_\rho= \{y: d(T_\omega^jy,y)<2\rho \text{ for some } 1< j<\Delta\}$.
Let $J = \mathfrak{ab}|\log \rho|$ where $\mathfrak{a}, \mathfrak{b}$ be as before and put 
\begin{eqnarray*}
\mathcal{W}^{\omega,1}_\rho &=&\{y:d(T_\omega^jy,y)<2\rho \text{ for some } 0< j\le J\},\\
\mathcal{W}^{\omega,2}_\rho &=&\{y:d(T_\omega^jy,y)<2\rho \text{ for some } J< j<\Delta\}.
\end{eqnarray*}
We will get  estimates on the measure of $\mathcal{W}^{\omega,i}$, $i=1,2$.

\subsection{Estimating $\mathcal{W}_\rho^{\omega,2}$}
We follow the proof of Proposition~\ref{prop.short.returns} and put $\mathcal{M}^\omega_\rho(n)= \{y:d(T_\omega^ny,y) < 2\rho\}$ the level set. As in~\eqref{level_summand} one has now
$$
\mu^\omega(\mathcal{M}^\omega_\rho(n)) 
\le \sum_{\zeta}\Theta(n)  \,\mu^\omega_{\hat\gamma^u}(T_\omega^{n}(T_\omega^{-n}\mathcal{M}^\omega_{\rho}(n) \cap \zeta)) \,\mu_{\gamma^u}^{\tilde{\omega}}(\zeta).
$$
To get a bound on the diameter of the set 
$T_\omega^{n}(T_\omega^{-n}\mathcal{M}^\omega_{\rho}(n) \cap \zeta)$ we take points 
$x,y \in T_\omega^{-n}\mathcal{M}^\omega_{\rho}(n) \cap \zeta$. 
Then
$$
d(T_\omega^nx, T_\omega^ny) \le d(T_\omega^nx, x)+d(x, y)+d(y, T_\omega^ny) \le 4\rho+ \delta(n).
$$
As a result,
$$
\mu^\omega(\mathcal{M}^\omega_\rho(n)) 
\le c_1\Theta(n)\left( \rho^{u_0} + \delta(n)^{u_0}\right)\sum_{\zeta:\,\zeta\cap T_\omega^{-n}\mathcal{M}^\omega_\rho\not=\varnothing}\mu^{\tilde\omega}_{\gamma^u}(\zeta).
$$
Since the sum of $\zeta$ is bounded, we thus get with $\Theta(n)=\mathcal{O}(n^{\kappa'})$
$$
\mu^\omega(\mathcal{W}_\rho^{\omega,2}) 
\le \sum_{n=J}^{\Delta}\mu^\omega(\mathcal{M}^\omega_\rho(n))
 \le c_2 \left(\Delta^{\kappa'+1}\rho^{u_0} + \delta(J)^{u_1}J^{\kappa'}\right)
$$
since $J = \mathfrak{ab}|\log \rho|$ where $u_1=u_0$ if $\delta$ is superpolynomial
and $u_1=u_0-\frac1\kappa$ if $\delta$ decays polynomially at rate $\kappa$. 

\subsubsection{Estimating $\mathcal{W}_\rho^{\omega,1}$}
Notice that $\mathcal{W}_\rho^{\omega,1} \subset \mathcal{V}_{2\rho}^{\omega}$ 
and by Proposition~\ref{prop.short.returns}
$$
\mu^\omega(\mathcal{W}_\rho^{\omega,1} )\le \mu^\omega(\mathcal{V}_{2\rho}^{\omega}) 
 \le C_3\!\left(e^{-\mathfrak{c}\abs{\log 2\rho}^{1/2}}  
 +\delta(\mathfrak{ab}\abs{\log 2\rho})^{u_1}|\log2\rho|^{\kappa'}\right).
$$
\subsubsection{Almost sure limit of $\tilde{\mathcal{R}}_2$}
To estimate $\mathcal{W}^\omega_\rho=\mathcal{W}_\rho^{\omega,1}\cup\mathcal{W}_\rho^{\omega,2}$
we choose $\Delta=\rho^{-v'}$ and obtain
$$
\mu^\omega(\{y: \tau^\omega_{2\rho}(y)<\Delta\})=\mu^\omega(y:d(T_\omega^jy,y)<2\rho \text{ for some } 0<j<\Delta)= \mathcal{O}(\abs{\log \rho}^{-b}),
$$
where  $b=u_1\kappa-\kappa'$ ($b>0$) assuming $\delta$ decays polynomially
with power $\kappa$ and $0<v'<\frac{u_0}{\kappa'+1}$.

In order to get a limit for $\rho\to0$,  we apply the Borel-Cantelli Lemma to the 
sequence $\rho_n=e^{-n^{2/b}}$ and obtain that for $\mu^\omega-$almost every $y$
and all $n$ large enough:
$\tau^\omega_{2\rho_n}(y)\geq \Delta_n=\rho_n^{-v'}$ or
$$
\tau^\omega_{2e^{-n^{2/b}}}(y)\geq e^{v' n^{2/b}}
$$
which implies 
$$
\liminf _{n\rightarrow\infty}\frac{\log\tau^\omega_{2e^{-n^{2/b}}}(y)}{-\log2e^{-n^{2/b}}}
\geq \liminf_{n\rightarrow\infty} \frac{\log e^{v' n^{2/b}}}{-\log2e^{-n^{2/b}}}=v'.
$$
For every $\rho>0$ small enough, there is an $n$ so that $\rho_{n+1}\le \rho<\rho_n$ 
and consequently
$$
\frac{\log\tau^\omega_{2\rho_{n}}(y)}{-\log\rho_{n+1}}
\le\frac{\log\tau^\omega_{2\rho}(y)}{-\log\rho}
\le\frac{\log\tau^\omega_{2\rho_{n+1}}(y)}{-\log\rho_n}.
$$
As $\frac{\log\rho_{n+1}}{\log\rho_n}=\frac{(n+1)^2}{n^2}\to 1$ as $n\to\infty$, we conclude
that
$$
\liminf _{\rho\rightarrow0}\frac{\log\tau^\omega_{2\rho}(y)}{-\log\rho}
=\liminf _{n\rightarrow\infty}\frac{\log\tau^\omega_{2e^{-n^{2/b}}}(y)}{-\log2e^{-n^{2/b}}}
\geq v'.
$$
In other words for any $0<v<v'$ one has $\mu^\omega(\mathcal{L}(\rho_0))\rightarrow1$ 
as $\rho_0\rightarrow0$ where
$$
\mathcal{L}(\rho_0)=\{y: \tau^\omega_{2\rho}(y)>\rho^{-v}\;\forall \rho<\rho_0\}.
$$
Finally, this implies by the Lebesgue density theorem (following the proof of Lemma~42 of~\cite{Sau09}) that
$$
\tilde{\mathcal{R}}_2 = \frac{1}{\mu^\omega(\ball)}\mu^\omega(\ball \cap \{y:\tau^\omega_{\ball}(y) < \Delta\})\underset{\rho \rightarrow0}{\longrightarrow}0
$$
for $\mu^\omega$-almost every $x$, where here $\Delta=\rho^{-v}$, $v<\frac{u_0}{\kappa'+1}$.

\section{Examples}\label{example}

\subsection{Random $C^2$ interval maps}
As an example we consider random maps on the unit interval $I$.
As above let  $S:\Omega\times I\circlearrowleft$
be a skew action where the map $\theta$ is acting invertibly on $\Omega$.
For each $\omega$ the map $T_\omega:I\to I$ is a piecewise expanding
map on the interval $I$. We assume that $T_\omega$ is piecewise $C^2$ 
with uniformly bounded $C^2$ norms. For $\varphi\in\mathscr{I}^\omega_n$ we denote by 
$\zeta_\varphi=\varphi(I)$ the $n$-cylinder associated with $\varphi$. 
As before, put 
$$\delta(n)=\sup_\omega\max_{\varphi\in\mathscr{I}^\omega_n}|\zeta_\varphi|.
$$
 For a function $\psi:I\to\mathbb{R}$ be denote by $\var\,\psi$ its variation on the
 unit interval and let 
$$
\|f\|=\var\,\psi +\|f\|_{\mathscr{L}^1}
$$ 
be its norm. This makes $X=\{f\in C(I,\mathbb{R}), \|f\|<\infty\}$
a Banach space which is equipped with the strong norm $\|\cdot\|$ and 
 the weak norm $\|\cdot\|_{\mathscr{L}^1}$.
Consider the transfer operator $\mathcal{L}$ on $X$ which for each $\omega$ 
maps a function $\psi\in X$ on the interval to a function $\mathcal{L}_\omega\psi$ on the interval.
It is given by 
$$
\mathcal{L}_\omega \psi(x)=\sum_{\varphi\in\mathscr{I}^\omega_1}
\frac{\psi(\varphi x)}{|DT_\omega(\varphi x)|}.
$$
The iterates of the transfer operator are 
$\mathcal{L}_\omega^n
=\mathcal{L}_{\theta^{n-1}\omega}\circ\cdots\circ\mathcal{L}_{\theta\omega}\circ\mathcal{L}_\omega$.
We shall next prove the Doeblin-Fortet inequality:

\begin{lem}\label{DoeblinF}
Assume that $\delta(k)$ decreases to zero as $k\to\infty$.
Then there exist $\eta<1$, $n\in\mathbb{N}$ and a constant $C_4$ so that
for every $\omega\in\Omega$ and $\psi:I\to\mathbb{R}$ with $\|\psi\|<\infty$ one has
$$
\var\,\mathcal{L}_\omega^n\psi\le\eta\,\var \,\psi+C_4 \|\psi\|_{\mathscr{L}^1}.
$$
\end{lem}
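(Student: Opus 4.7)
The strategy is the classical Lasota--Yorke / Doeblin--Fortet argument, adapted to the random setting where the role of the uniform expansion factor $\lambda^{-n}$ is played by $\delta(n)$. I would write, for $\psi\in X$,
$$
\mathcal{L}_\omega^n \psi \;=\; \sum_{\varphi\in\mathscr{I}_n^\omega} h_\varphi,
\qquad h_\varphi(x) \;=\; \psi(\varphi x)\,g_\varphi(x)\,\ind_{J_\varphi}(x),
$$
where $g_\varphi = 1/|DT_\omega^n\circ\varphi|$ and $J_\varphi = T_\omega^n\zeta_\varphi$, and then bound $\var h_\varphi$ cylinder-by-cylinder before summing.

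First I would establish a uniform distortion estimate: the hypothesis that each $T_\omega$ is piecewise $C^2$ with uniformly bounded $C^2$ norm, together with the chain rule applied along an orbit in $\zeta_\varphi$, yields a constant $D$ (independent of $\omega$ and $n$) such that
$$
\sup_{J_\varphi} g_\varphi \le e^D \inf_{J_\varphi} g_\varphi,
\qquad \var_{J_\varphi} g_\varphi \le D \sup_{J_\varphi} g_\varphi.
$$
This is the standard telescoping estimate, where the series $\sum_{k=0}^{n-1}|\zeta_{\varphi,k}|$ of diameters of preimage cylinders is summable because $\delta(k)\to 0$ together with the uniform bound on $T_\omega''/(T_\omega')^2$.

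Next, applying the basic variation inequality
$$
\var(\ind_J uv) \;\le\; \sup_J|u|\,\var_J v + \sup_J|v|\,\var_J u + 2\sup_J|u|\sup_J|v|
$$
to $u=g_\varphi$, $v=\psi\circ\varphi$ on $J=J_\varphi$, and using $\var_{J_\varphi}(\psi\circ\varphi)=\var_{\zeta_\varphi}\psi$ together with the distortion bound, I obtain
$$
\var h_\varphi \;\le\; \sup_{J_\varphi} g_\varphi\,\var_{\zeta_\varphi}\psi \;+\; (D+2)\sup_{J_\varphi} g_\varphi \cdot \sup_{\zeta_\varphi}|\psi|.
$$
Summing over $\varphi$, the first term is controlled by observing that $\sup g_\varphi\le e^D|\zeta_\varphi|/|J_\varphi|\le e^D\delta(n)/|J_\varphi|$, so its contribution is at most a constant times $\delta(n)\,\var\psi$ (the factors $|J_\varphi|^{-1}$ are absorbed after recalling that the covering multiplicity of the $J_\varphi$ is uniformly bounded, a consequence of the bounded branching of piecewise $C^2$ maps). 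For the second term I would use the standard one-dimensional inequality
$$
\sup_{\zeta_\varphi}|\psi| \;\le\; \var_{\zeta_\varphi}\psi \;+\; \frac{1}{|\zeta_\varphi|}\int_{\zeta_\varphi}|\psi|,
$$
which splits into a further $\delta(n)\,\var\psi$ contribution plus, after using distortion to cancel $|\zeta_\varphi|^{-1}$ against $\sup g_\varphi$, a bounded multiple of $\|\psi\|_{\mathscr{L}^1}$.

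Putting everything together yields
$$
\var(\mathcal{L}_\omega^n\psi) \;\le\; C\,\delta(n)\,\var\psi \;+\; C_4\|\psi\|_{\mathscr{L}^1},
$$
and since $\delta(n)\to 0$, choosing $n$ large enough to make $C\delta(n)=\eta<1$ completes the proof. The main obstacle I expect is the uniform control of the ``short'' boundary branches where $|J_\varphi|$ can be small (so that $|J_\varphi|^{-1}$ is not absorbed by distortion); the standard remedy is either to estimate their number by the uniformly bounded branching combinatorics of the piecewise $C^2$ maps, or to isolate their contribution and absorb it into the $L^1$ term by summing their lengths. Both ingredients--distortion and bounded branching--are exactly the two pieces of information encoded in the uniform $C^2$ hypothesis on the maps $T_\omega$.
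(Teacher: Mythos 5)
Your argument is a valid Lasota--Yorke estimate, but it follows a genuinely different route from the paper's. The paper does not establish a global distortion constant for $T_\omega^n$ up front; instead it controls $\var\,|DT_\omega^{p\ell}|^{-1}$ recursively by factoring $T_\omega^{p\ell}$ into blocks of length $\ell$ and iterating the product rule for variation, which produces a geometric sum $\sum_{j<p}(c_1\delta(\ell))^j c_1\delta((p-1-j)\ell)$ that tends to zero under the sole hypothesis $\delta(k)\to 0$ (no summability required for that step). It then handles the $\mathscr{L}^1$ term by bounding $|\psi\varphi|_\infty \le \int_I|\psi\varphi|\,d\lambda + \var\,\psi\varphi$ and converting $\sum_\varphi\int_I|\psi\varphi|\,d\lambda$ into a multiple of $\|\psi\|_{\mathscr{L}^1}$ via the fact that $\lambda$ is a fixed point of $\mathcal{L}_\omega$. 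You instead invoke a uniform distortion constant $D$ for the $n$-step map and bound $\sup g_\varphi$ by $e^D|\zeta_\varphi|/|J_\varphi|$, then use $\sup_{\zeta_\varphi}|\psi|\le\var_{\zeta_\varphi}\psi+\frac{1}{|\zeta_\varphi|}\int_{\zeta_\varphi}|\psi|$; this is the textbook piecewise-$C^2$ argument and is fine, but it trades the paper's recursive bookkeeping for a distortion lemma.

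Two points to tighten. First, the distortion estimate for $T_\omega^n$ over $n$-cylinders needs \emph{summability} $\sum_k\delta(k)<\infty$, not merely $\delta(k)\to 0$ as you write; the telescoping sum in the chain-rule argument is $\sum_{k<n}|\zeta_{\varphi,k}|$, and $\delta(k)\to 0$ alone does not make this bounded uniformly in $n$. This is harmless for the application (the theorem following this lemma does assume a summable $\delta$), but it means your proof establishes the lemma under a slightly stronger hypothesis than stated, whereas the paper's recursive device is designed precisely to avoid that strengthening for the $\var$ term (though the paper, too, ultimately invokes $\||DT_\omega^n|^{-1}\varphi\|_\infty\le c_1|\zeta_\varphi|$, so the issue is not fully absent there either). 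Second, the factor $|J_\varphi|^{-1}$ does not disappear by bounded covering multiplicity: for boundary branches $|J_\varphi|$ can be arbitrarily small, and multiplicity bounds control the number of overlapping $J_\varphi$'s, not their individual lengths. You correctly flag this at the end; the standard remedy (refining the partition so images are uniformly long, or replacing $\ind_{\ball}$ by a BV bump on the boundary cylinders) should be spelled out rather than gestured at, since it is exactly the place where the argument can leak a non-small constant in front of $\var\,\psi$.
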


\begin{proof} Let us fix $\omega$.
In order to estimate $\var\,|DT_\omega^n|^{-1}$ let $\ell\in\mathbb{N}$ and $n=p\ell$. Then, for $\varphi\in \mathscr{I}^\omega_n$, $|DT_\omega^{p\ell}|^{-1}\varphi
=(|DT_\omega^{(p-1)\ell}|^{-1}T_\omega^\ell\varphi)|DT_\omega^\ell|^{-1}\varphi$ 
and 
$$
\var\,|DT_\omega^{p\ell}|^{-1}\varphi
\le||DT_\omega^{(p-1)\ell}|^{-1}T_\omega^\ell\varphi|_\infty\var\, |DT_\omega^\ell|^{-1}\varphi
+||DT_\omega^\ell|^{-1}\varphi|_\infty\var\,|DT_\omega^{(p-1)\ell}|^{-1}T_\omega^\ell\varphi.
$$
There exists a constant $c_1$ so that 
$$
||DT_\omega^{\ell}|^{-1}\varphi|_\infty\le c_1|\zeta_\varphi|\le c_1\delta(\ell)
$$
and similarly 
$|DT_\omega^{(p-1)\ell}|^{-1}\varphi|_\infty\le c_1\delta((p-1)\ell)$. 
Hence since $T_\omega^\ell\varphi\in\mathscr{I}^{\theta^\ell\omega}_{(p-1)\ell}$
one has 
$$
\var\,|DT_\omega^{p\ell}|^{-1}\varphi\le c_1\delta(\ell)\var\,|DT_\omega^{(p-1)\ell}|^{-1}T_\omega^\ell\varphi+c_1\delta((p-1)\ell)\var\,|DT_\omega^{\ell}|^{-1}\varphi.
$$
Recursively one obtains for all $\varphi\in\mathscr{I}^\omega_{p\ell}$:
\begin{eqnarray*}
\var\,|DT_\omega^{p\ell}|^{-1}\varphi
&\le&\sum_{j=0}^{p-1}(c_1\delta(\ell))^jc_1\delta((p-j-1)\ell)\var\,|DT_\omega^{\ell}|^{-1}T_\omega^{j\ell}\varphi\\
&\le&\sum_{j=0}^{p-1}(c_1\delta(\ell))^jc_1\delta((p-j-1)\ell)\var\,|DT_\omega^{\ell}|^{-1}.
\end{eqnarray*}
If we choose $\ell$ so that $\tilde\eta=c_1\delta(\ell)<1$ then we obtain 
$$
\var\,|DT_\omega^{n}|^{-1}\varphi \le \Delta(n)\var\,|DT_\omega^{\ell}|^{-1}
$$
for all $\varphi\in \mathscr{I}^\omega_n$, for all $n\in\mathbb{N}$ and for some function $\Delta(n)$ which decays to zero at the same rate as 
$\delta(n)$.

The variation of the transfer operator is then estimated as:
$$
\var_I\,\mathcal{L}_\omega^n\psi
=\sum_{\varphi\in\mathscr{I}^\omega_n}(|DT_\omega^{n}|^{-1}\psi)\varphi
\le\sum_{\varphi\in\mathscr{I}^\omega_n}
\left(\var\, \psi\varphi||DT_\omega^{n}|^{-1}\varphi|_\infty+|\psi\varphi|_\infty\var\,|DT_\omega^{n}|^{-1}\varphi\right).
$$
Since $\left||DT_\omega^{n}|^{-1}\right|_\infty\le c_1|\zeta_{\varphi}|\le c_1\delta(n)$ one obtains
$$
\var_I\,\mathcal{L}_\omega^n\psi
\le c_1\delta(n)\sum_{\varphi}\var\, \psi\varphi
+c_2\Delta(n)\sum_\varphi |\psi\varphi|_\infty
$$
where $c_2=\sup_\omega|DT_\omega^\ell|^{-1}$.
With the estimate 
$|\psi\varphi|_\infty\le \int_I|\psi\varphi|\,d\lambda+\var\,\psi\varphi$
this leads to
$$
\var_I\,\mathcal{L}_\omega^n\psi
\le (c_1\delta(n)+c_2\Delta(n))\var\, \psi
+c_2\Delta(n)\left|\frac1{|DT_\omega^{n}|^{-1}}\right|_\infty\int_I\sum_\varphi |\psi\varphi|_\infty |DT_\omega^{n}|^{-1}\varphi\,d\lambda
$$
as $\sum_{\varphi}\var\,\psi\varphi=\var\,\psi$. Since the Lebesgue measure $\lambda$ is 
a fixed point of the transfer operator we finally get
$$
\var_I\,\mathcal{L}_\omega^n\psi
\le (c_1\delta(n)+c_2\Delta(n))\var\, \psi
+c_2\Delta(n)|DT_\omega^{n}|_\infty\int_I|\psi|_\infty \,d\lambda.
$$
Now, if we choose $n$ so that $\eta=c_1\delta(n)+c_2\Delta(n)<1$  we obtain
$$
\var_I\,\mathcal{L}_\omega^n\psi
\le \eta\,\var\, \psi+c_3\|\psi\|_{\mathscr{L}^1},
$$
where $c_3\le c_2\Delta(n)|DT_\omega^{n}|_\infty$.  Put $C_4=c_3$.
Note that the constant $\eta<1$ can be chosen arbitrarily small.
\end{proof}

\noindent This proves the property~(LY2) of~\cite{Bu99}. The other 
two properties~(LY0) and~(LY1) are naturally satisfied as are the 
properties~(V). To verify condition~(RC) let 
$\psi\in\mathcal{C}_a$ where $\mathcal{C}_a=\{\psi>0:\var\,\psi\le a\|\psi\|_{\mathscr{L}^1}\}$.
Then by iterating Lemma~\ref{DoeblinF} one obtains
$$
\var\,\mathcal{L}_\omega^m\psi
\le\eta^\frac{m}n\var\,\psi+\frac{C_4}{1-\eta^\frac1n}\|\psi\|_{\mathscr{L}^1}
\le\left(\eta^\frac{m}n+\frac{C_4}{1-\eta^\frac1n}\right)\|\psi\|_{\mathscr{L}^1}
$$
for all large $m$.
If we choose $a\ge\frac{2C_4}{(1-\eta^\frac1n)^2}$ this implies
 $\inf \mathcal{L}_\omega^m\psi\ge\|\mathcal{L}_\omega^m\psi\|_{\mathscr{L}^1}
-\var\,\mathcal{L}_\omega^m\psi\ge\frac{a}2$.
Hence the condition~(RC)
of~\cite{Bu99} is satisfied with $\alpha_n=\frac{a}2$.

Therefore by the Main Theorem of~\cite{Bu99}
there exists a family of absolutely continuous measure $\mu^\omega$
on the fibres $\{\omega\}\times I$ which satisfy the generalised 
invariance property $T_\omega^*\mu^\omega=\mu^{\theta\omega}$.
In particular there is an $S$-invariant measure $\mathbb{P}$ on $\Omega\times I$
which is of the form $d\mathbb{P}(\omega,x)=d\mu^\omega(x)d\nu(\omega)$,
where $\nu$ is a $\theta$-invariant measure on $\Omega$.
Also note as a consequence of the lower bound $\inf\psi\ge\frac{a}2\|\psi\|_{\mathscr{L}^1}$
the  densities $h_\omega$ of $\mu^\omega$ have a uniform lower bound,
that is there exists a constant $c_1>0$ so that $\inf h_\omega\ge c_1$ for all
$\omega\in\Omega$.

Moreover one has decay of the annealed correlation function~(I)
and also the decay of the quenched correlation functions~(II).
In fact, the decay function $\lambda(n)$ decays exponentially fast to zero.

Since the measures $\mu^\omega$ are absolutely continuous with respect
to Lebesgue measure, condition~(V) is satisfied with any values $d_0<1<d_1$ 
arbitrarily close to $1$ and from the uniform lower bound on the densities $h_\omega$,
i.e.\ we can take $K=1/c_1$. 
Condition~(III) follows from the uniform boundedness of second order derivatives.
The annulus condition~(VI) is satisfied with $\xi=\beta=1$.
We can therefore invoke Theorems~\ref{main.theorem} and \ref{main.theorem1}
and obtain the following result:

\begin{thrm} Let $S:\Omega\times I\circlearrowleft$ be a skew system as described above,
where the maps $T_\omega$ are piecewise $C^2$ with uniformly bounded
$C^2$ derivatives. Let $\delta(n)$ be a summable sequence which monotonically 
decreases to zero so that $|\zeta_\varphi|\le\delta(n)$ for all $\varphi\in\mathscr{I}^\omega_n$
for all $n$. Then
$$
\mu^\omega\!\left(y\in[0,1]: 
\tau^\omega_{B_{\rho}(\mathsf{x})}(y)>\frac{t}{\mu(B_{\rho}(\mathsf{x}))}\right)
\longrightarrow e^{-t}\qquad \mbox{ as }\rho\to0
$$
and
$$
\mu^\omega_{B_{\rho}(\mathsf{x})}\!\left(y\in [0,1]: 
\tau^\omega_{B_{\rho}(\mathsf{x})}(y)>\frac{t}{\mu(B_{\rho}(\mathsf{x}))}\right)
\longrightarrow e^{-t}\qquad \mbox{ as }\rho\to0
$$
for all $t>0$ for Lebesgue almost every $\mathsf{x}\in [0,1]$ and
 $\nu$-almost every $\omega\in\Omega$.
\end{thrm}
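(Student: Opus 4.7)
The plan is to apply Theorem~\ref{main.theorem} directly to the setup of piecewise $C^2$ interval maps, so the bulk of the work is to verify that hypotheses~(I)--(IX) all hold in this setting, and then to identify which of the three regimes~(A), (B), or (C) is applicable. Since everything is one-dimensional there is no nontrivial stable/unstable geometry: the unstable leaves are just the interval itself (no holonomy), and the condition that functions be constant on local stable leaves in~(II) is vacuous.

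First, I would feed the Doeblin--Fortet inequality of Lemma~\ref{DoeblinF} together with the easy bounds on $\mathcal{L}_\omega$ into the framework of Buzzi~\cite{Bu99}: checking (LY0)--(LY2), (V) and (RC) produces a family of absolutely continuous fibered measures $d\mu^\omega=h_\omega\,d\lambda$ satisfying $T_\omega^*\mu^\omega=\mu^{\theta\omega}$, an $S$-invariant product measure $d\mathbb{P}=d\mu^\omega\,d\nu$, a uniform lower bound $\inf_\omega\inf h_\omega\ge c_1>0$, and exponential decay of both annealed and quenched correlations of Lipschitz versus $L^\infty$ observables. This gives (I) and (II) with $\lambda(k)$ decaying exponentially. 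The uniform lower and upper bounds on $h_\omega$ (the upper bound coming from $\|h_\omega\|_{\mathrm{BV}}\le C$) immediately yield (III) with any $d_0<1<d_1$ arbitrarily close to $1$, (IV) with any $u_0<1$, the annulus conditions~(VII) and (IX) with $\xi=\beta=1$ (since $\mu^\omega(B_{\rho+r}\setminus B_{\rho-r})\le\|h_\omega\|_\infty\cdot 2r$ while $\mu^\omega(B_\rho)\gtrsim\rho$), and (VIII) with $K=c_1^{-1}\sup_\omega\|h_\omega\|_\infty$. Condition~(V) (distortion) follows from the uniform $C^2$ bound by the standard Koebe-type argument on inverse branches, giving $\Theta(n)=\mathcal{O}(1)$ so $\kappa'=0$. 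Condition~(VI) is exactly the hypothesis $|\zeta_\varphi|\le\delta(n)$.

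Next, I would pick the right parameter regime in Theorem~\ref{main.theorem}. Since $\lambda(k)$ decays exponentially, regime~(C) applies as soon as $\delta(n)$ also decays superpolynomially; in the more general summable-$\delta$ case one is in regime~(B), and the inequality $(\beta/\xi+d_1)/p<\min\{1,u_0\}$ is trivially satisfied because $p$ can be taken arbitrarily large (exponential decay) while $d_1$ is arbitrarily close to $1$ and $\beta/\xi=1$. The hypothesis $\gamma=\kappa u_0-2-\kappa'>1$ in case~(A) is not used here because $\lambda$ decays faster than any polynomial; what one really uses is that the assumption $\sum_n\delta(n)<\infty$ suffices (together with $\kappa'=0$ and $u_0$ arbitrarily close to $1$) to make $\delta(J)^{u_1}\to 0$ and the error bounds in Propositions~\ref{main.proposition} and \ref{prop.short.returns} go to zero.

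With all assumptions verified, Theorem~\ref{main.theorem} gives the quenched exponential law for both hitting and return times for $\mu^\omega$-almost every $\mathsf{x}$ and $\nu$-almost every $\omega$; because $\mu^\omega$ is equivalent to Lebesgue measure with bounded density and density bounded away from zero, $\mu^\omega$-a.e.\ is the same as Lebesgue-a.e., giving the statement as written. The only step where I expect a subtle issue is ensuring that Buzzi's construction really supplies the \emph{uniform} (in $\omega$) bounds on $\|h_\omega\|_{\mathrm{BV}}$ and $\inf h_\omega$ that both~(VIII) and the annulus estimates require; this uses the fact that the constants $\eta$ and $C_4$ in Lemma~\ref{DoeblinF} and the constant $a$ in the invariant cone $\mathcal{C}_a$ are all independent of $\omega$, which in turn relies on the uniform $C^2$ hypothesis on $\{T_\omega\}$. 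Once that uniformity is in hand, the rest of the verification is routine.
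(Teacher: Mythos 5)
Your approach matches the paper's closely: apply Buzzi's framework via the Doeblin--Fortet inequality to produce fibred a.c.\ measures $\mu^\omega = h_\omega\lambda$ with uniform upper/lower density bounds and exponential quenched/annealed decay, verify (I)--(IX) with $\xi=\beta=1$, $\kappa'=0$, $u_0<1$, $d_0<1<d_1$ arbitrarily close to $1$, and invoke Theorem~\ref{main.theorem}. Those verifications are correct, and your caution about needing the constants in Lemma~\ref{DoeblinF} (and the cone parameter) to be uniform in $\omega$ is well placed.

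The one misstep is in your regime bookkeeping. You claim that a merely summable $\delta$ puts you in regime~(B), and that the condition $\gamma = \kappa u_0 - 2 - \kappa' > 1$ of case~(A) is avoided because $\lambda$ decays exponentially. Both claims are off. Regime~(B) explicitly requires $\delta$ to decay \emph{super polynomially}; a summable $\delta(n)\sim n^{-\kappa}$ with $1<\kappa\le 3$ is not super polynomial, so (B) does not apply. And the $\gamma>1$ constraint comes entirely from the decay rate of $\delta$ (through Proposition~\ref{prop.short.returns} and the Borel--Cantelli step in Section~\ref{entry.times.distribution}); the decay of $\lambda$ plays no role in it. With $\kappa'=0$ and $u_0$ arbitrarily close to $1$ one needs $\kappa>3$, and exponential $\lambda$ does nothing to relax that. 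So for a merely summable polynomial $\delta$ with small $\kappa$, Theorem~\ref{main.theorem} does not directly apply. The paper itself is silent on which case (A)/(B)/(C) it is invoking, and its hypothesis ``summable'' is arguably too weak as stated; the intended (and clean) case, which the paper's closing remark emphasises, is uniformly expanding maps where $\delta$ decays exponentially, landing you squarely in regime~(C) with no side conditions to check.
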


\noindent Clearly, if the maps $T_\omega$ are uniformly expanding then 
$\delta$ decays exponentially and satisfies the requirement of the theorem.

\subsection{Random parabolic interval maps}

We use the family of Pomeau-Manneville maps indexed by $\alpha>0$ which is given by
$$
T_\alpha(x)=\begin{cases}x+2^{\alpha}x^{1+\alpha}&\mbox{ if $x\in[0,\frac12)$}\\
2x-1&\mbox{ if $x\in[\frac12,1]$}\end{cases}.
$$
These maps have a neutral (parabolic) fixed point at $x=0$ and are otherwise
expanding.
It is known that if $\alpha<1$ then there exists an invariant absolutely continuous 
probability measure. Here we assume the setting of~\cite{BB16}. Let $\Omega=\{0,1\}^{\mathbb{Z}}$ be the `driving space' with the 
left shift map $\theta:\Omega\circlearrowleft$. We equip $\Omega$ with 
the Bernoulli measure $\nu$ with weights $(\frac12, \frac12)$. Let $0<\alpha_0<\alpha_1<1$ 
and define the function $\alpha:\Omega\to\{\alpha_0,\alpha_1\}$ by
$$
\alpha(\omega)=\begin{cases}\alpha_0&\mbox{ if $\omega_0=0$}\\
\alpha_1&\mbox{ if $\omega_0=1$}\end{cases}.
$$
Then we have a skew action  $S:\Omega\times I$, with $I=[0,1]$
defined by $S(\omega,x)=(\theta(\omega),T_{\omega}x)$, 
where we wrote $T_\omega=T_{\alpha(\omega)}$.
Its iterates are $S(\omega,x)=(\theta^n\omega,T_\omega^nx)$, where 
$T_\omega^n=T_{\theta^{n-1}\omega}\circ\cdots\circ T_{\theta\omega}\circ T_\omega$.
It is shown in~\cite{BB16} that there exists an $S$-invariant probability measure
$\mu=h\nu\times\lambda$, where $\lambda$ is the Lebesgue measure on $I$
and where the density $h:\Omega\times I\to\mathbb{R}^+$ is Lipschitz continuous
on compact subsets of $\Omega\times(0,1]$. Notice that we here identify the 
shift space $(\Omega,\theta)$ with the doubling interval map $p_1=p_2=\frac12$ as
in~\cite{BB16}. Let us note that in~\cite{BB16} Lemma~3.1 we can use 
the cone of functions 
$$
\mathcal{C}_2=\left\{f\in C^0((0,1])\cap\mathscr{L}^1(\lambda): f\ge0, f\mbox{ decreasing},
 x^{1+\alpha}f\mbox{ increasing}, f(x)\le ax^{-\alpha}\lambda(f)\right\}
 $$
 where $a$ according to~\cite{AHNTV14} Lemma~1.2  is chosen large enough so that 
 $\mathcal{C}_2$ is invariant under the transfer operators for $T_{\alpha_0}$ and 
 $T_{\alpha_1}$. Then lets us replace the cone $\mathcal{C}_a$ in~\cite{BB16} 
 by $\mathcal{C}_2$ to obtain the invariant density $h$ for the annealed measure $\mu$.
On the fibres $I_\omega=\{\omega\}\times I$ we then have the density $h_\omega$
given by $h_\omega(x)=h(\omega,x)$. This defines the fibred measures
 $\mu^\omega=h_\omega\lambda$ on $I_\omega$ which have the invariance
 property $T_\omega^*\mu^\omega=\mu^{\theta\omega}$.
For the transfer operator $\mathcal{P}_\omega$ (adjoint to $T_\omega$), one has $\mathcal{P}_\omega h_\omega=h_{\theta\omega}$ and 
 $\mathcal{P}_\omega^*\lambda=\lambda$ and also  by~\cite{AHNTV14} Theorem~1.6:
\begin{eqnarray*}
& &\left| \int\psi(\phi\circ T_\omega^n)\,d\mu^\omega
 -\int\psi\,d\mu^\omega\int\phi\,d\mu^{\theta^n\omega}\right|\\
 &=&\left|\int(\mathcal{P}_\omega^n\psi h_\omega)\phi\,d\lambda
 -\mu^\omega(\psi)\int\phi\mathcal{P}_\omega^nh_\omega\,d\lambda\right|\\
 &\le&\int|\phi|\cdot\left|\mathcal{P}_\omega^n(\psi h_\omega
 -h_\omega\mu^\omega(\psi))\right|\,d\lambda\\
&\le & c_1|\phi|_\infty\left(\|\psi h_\omega\|_{\mathscr{L}^1(\lambda)}
 +\|h_\omega\mu^\omega(\psi)\|_{\mathscr{L}^1(\lambda)}\right)\frac{\log^\frac1{\alpha_1}n}{n^{\frac1{\alpha_1}-1}},
 \end{eqnarray*}
 for some constant $c_1$ (which by~\cite{AHNTV14} is equal to $\max\{C_{\alpha_0},C_{\alpha_1}\}$).
 This is under the stated assumption that $\phi h_\omega$ and $h_\omega\mu^\omega(\psi)$ 
 belong to the cone of functions $\mathcal{C}_2$. This in fact applies to the function 
 $h_\omega\mu^\omega(\psi)$. A careful reading of the proof makes it apparent 
 that the class of functions to which the contraction applies is far wider and in fact is
 only determined by the property that $\|\phi-\mathbb{A}_\epsilon\phi\|_{\mathscr{L}^1}$
 is bounded by a multiple of $\epsilon^{1-\alpha}$. The smoothing operator $\mathbb{A}_\epsilon$
 is given by $\mathbb{A}_\epsilon\phi(x)=\frac1{2\epsilon}\int_{B_\epsilon(x)}\phi(y)\,d\lambda(y)$.
 Since we want $\psi$ to be the indicator function of $\ball$, this requirement is 
 clearly satisfied as $\|\phi-\mathbb{A}_\epsilon\phi\|_{\mathscr{L}^1}\lesssim\epsilon$.
 Consequently, for the purposes of Theorem~\ref{main.theorem}, Assumption~(I) is satisfied with 
 $\lambda(n)=\mathcal{O}(n^{-p})$ for any $p<\frac1{\alpha_1}-1$.
Since one can integrate w.r.t.\ $d\nu(\omega)$
 also Assumption~(II) is satisfied with the same $\lambda$. 
 
 Clearly the dimension of $\mu$ is equal to one and Assumption~(V)
 is satisfied with any $d_0<1<d_1$ arbitrarily close to $1$ from the fact that the density functions $h_\omega$ are
 uniformly bounded and bounded away from $0$.
 Assumption~(VI) is satisfied with $\xi=\beta=1$. 
 Also, if we denote by $\psi_{\theta^{-n}\omega}^n$ the (unique) inverse branch of 
 $T_{\theta^{-n}\omega}^n$ 
 which contains the parabolic point $0$, then one has that 
 $|\psi_{\theta^{-n}\omega}^n(I)|=\mathcal{O}(n^{-1/\alpha_1})$ for all $\omega$. Hence 
 $\delta(n)=\mathcal{O}(n^{-\kappa})$ with $\kappa=1/\alpha_1$.
 
 To estimate the distortion we again look at the `worst case' which are
 the parabolic inverse branches $\psi_{\theta^{-n}\omega}^n$ of the map $T_{\theta^{-n}\omega}^n$.
  Put $a_n(\omega)=\psi_{\theta^{-n}\omega}^n(a_0)$,  where $a_0=\frac12$. Then
 $$
 DT_{\theta^{-n}\omega}^n(a_n)=\prod_{j=1}^{n}DT_{\theta^{-j}\omega}(a_j).
 $$
 For the parabolic branch: 
 $DT_{\theta^{-j}\omega}(x)
 =1+(1+\alpha(\theta^{-j}\omega))2^{1+\alpha(\theta^{-j}\omega)}x^{\alpha(\theta^{-j}\omega)})$.
 Also
 $$
 a_{j-1}=T_{\theta^{-j}\omega}(a_j)
 =a_j\!\left(1+2^{1+\alpha(\theta^{-j}\omega)}a_j^{\alpha(\theta^{-j}\omega)}\right)
 =a_j\!\left(DT_{\theta^{-j}\omega}(a_j)\right)^{1/(1+\alpha(\theta^{-j}\omega)}+\mbox{ h.o.t.}
 $$
 and
 $$
 DT_{\theta^{-j}\omega}(a_j)
 =\left(\frac{a_{j-1}}{a_j}\right)^{1+\alpha(\theta^{-j}\omega)}+\mbox{ h.o.t.}
 $$
 which implies the estimate 
 $$
  DT_{\theta^{-n}\omega}^n(a_n)
  \le c_2\prod_{j=0}^{n-1}\left(\frac{a_{j-1}}{a_j}\right)^{1+\alpha(\theta^{-j}\omega)}
  \le c_2\!\left(\prod_{j=0}^{n-1}\frac{a_{j-1}}{a_j}\right)^{1+\alpha_1}
  = c_2\,a_n^{-(1+\alpha_1)}.
 $$
 Denote by $a'_k=a_k(\alpha_0)$ the inverse images of $a_0=\frac12$ in the 
 deterministic case when all maps have the parameter value $\alpha_0$. 
 Then $a'_k\sim c_3k^{-1/\alpha_0}$ for some $c_3>0$. Let us put 
$A_k=(a'_{k-1},a_0]$. In order to estimate the distortion of the maps $T_\omega^n$
on the images of $A_k$ and $A=(a_0,1]$ under the inverse branches
$\mathscr{I}^\omega_n$ of $T_\omega^n$ we look at the `worst case'
when the inverse branch is the parabolic branch $\psi_\omega^n$.
Then 
$$
\Theta(n)
=\frac{DT_{\theta^n\omega}^n(a_n(\omega))}{DT_{\theta^{n+k'}\omega}^n(\tilde{a}_{n+k'}(\omega))}
$$
where $\tilde{a}_{n+k'}=\psi_{\theta^{n+k'}\omega}^n(a'_k)$ and $k'$ is so that 
$a'_k\in\psi_{\theta^{k'}\omega}^{k'}A$.
We estimate the numerator from above by
$$
DT_{\theta^n\omega}^n(a_n(\omega))=
\mathcal{O}(1)\prod_{j=0}^{n-1}\left(\frac{a_j}{a_{j+1}}\right)^{1+\alpha(\theta^{-j}\omega)}
\lesssim \left(\frac{a_0}{a_{n}}\right)^{1+\alpha_1}
\lesssim n^{\frac{1+\alpha_1}{\alpha_0}}
$$
as $a_n\le a'_n=\mathcal{O}(n^{-1/\alpha_0})$.
The denominator is estimated from below as follows:
$$
DT_{\theta^n\omega}^n(\tilde{a}_{n+k'}(\omega))=
\mathcal{O}(1)\prod_{j=k'}^{n+k'-1}\left(\frac{\tilde{a}_j}{\tilde{a}_{j+1}}\right)^{1+\alpha(\theta^{-j}\omega)}
\gtrsim \left(\frac{\tilde{a}_{k'}}{\tilde{a}_{n+k'}}\right)^{1+\alpha_0}.
$$
Since
$|\tilde{a}_{j+k'}-\tilde{a}_{j-1+k'}|
=2^{1+\alpha(\theta^{-j}\omega)}\tilde{a}_{j+k'}^{1+\alpha(\theta^{-j}\omega)}
\le2^{1+\alpha_1}\tilde{a}_{j+k'}^{1+\alpha_1}$
one obtains
$$
\frac{\tilde{a}_{k'}}{\tilde{a}_{n+k'}}
\gtrsim \frac{\tilde{a}_{k'}}{\left(\tilde{a}_{k'}^{-\alpha_1}+n\right)^{-1/\alpha_1}}
$$
where $\tilde{a}_{k'}\sim c_3n^{-\eta/\alpha_0}$ for some $\eta>0$. Hence
$$
\Theta(n)
\lesssim n^{\frac{1+\alpha_1}{\alpha_0}}
\left(n^{\eta\frac{\alpha_1}{\alpha_0}}+n\right)^{-\frac{1+\alpha_0}{\alpha_1}}
n^{\eta\frac{1+\alpha_0}{\alpha_0}}.
$$
If we choose $\eta>0$ so that $\eta\frac{\alpha_1}{\alpha_0}<1$ then these two estimates 
combined yield
$$
\Theta(n)\le c_4{n^{\kappa'}},
$$
where 
$\kappa'=\frac{1+\alpha_1}{\alpha_0}-\frac{1+\alpha_0}{\alpha_1}+\eta\frac{1+\alpha_0}{\alpha_0}$.

Assuming that $0<\alpha_0<\alpha_1<\frac13$, the condition $u_0\kappa-2-\kappa'>1$ of Theorem~\ref{main.theorem} is satisfied since
$$
\kappa-2-\kappa'=\frac1{\alpha_0\alpha_1}
\!\left(\alpha_1^2-\alpha_0^2+\alpha_1(1+\eta-2\alpha_0+\eta\alpha_0)\right)>\frac{(1/3+\eta)}{\alpha_0}>1
$$
is positive for any $\eta>0$ and $u_0$ can be chosen arbitrarily close to $1$.

For the inverse branches $\mathscr{I}^\omega_n$ of $T_\omega^n$ denote by
 $\hat\zeta_\varphi$ the `$n$-cylinder'  that is a pre-image of either $A_k\cup A=(a'_{k-1},1]$
 under the inverse branches $\varphi\in\mathscr{I}^\omega_n$.
We can now nearly use Theorem~\ref{main.theorem}. Note that  if $\mathsf{x}\in (0,1)$
 then for all $n$ large enough  $\mathsf{x}\in A_{n^\theta}\cup A$. If we proceed as in the 
estimate of the term $\mathcal{R}_2$ in Section~\ref{estimate.r2} we obtain
$$
T_\omega^{-j}\ball\cap\ball
\subset\bigcup_{\zeta:\zeta\cap\ball\not=\varnothing}T^{-j}\ball\cap\zeta
=\mathscr{P}_1\cup\mathscr{P}_2
$$
where  the union is over $j$-cylinders $\zeta$ and
$$
\mathscr{P}_1
=\bigcup_{\zeta:\zeta\cap\ball\not=\varnothing}T_\omega^{-j}\ball\cap\hat\zeta,
\hspace{2cm}
\mathscr{P}_2
=\bigcup_{\zeta:\zeta\cap\ball\not=\varnothing}T_\omega^{-j}\ball\cap\zeta\setminus\hat\zeta.
$$
The first set is estimated as before in the main theorem. 
For the second term notice that 
$$
\mathscr{P}_2
=\bigcup_{\varphi\in\mathscr{I}^\omega_j:\varphi(I)\cap\ball\not=\varnothing}
T_\omega^{-j}\ball\cap\varphi(D_{n^\eta})
$$
where $ D_k=(0,a'_{k-1}]$. Hence
$$
\mathscr{P}_2
=\bigcup_{\varphi\in\mathscr{I}^\omega_j:\varphi(I)\cap\ball\not=\varnothing}\varphi(\ball\cap D_{n^\eta})
$$
 is empty for $n$ large enough, i.e.\ so that $a_{n^\eta}<\mathsf{x}$.

\begin{thrm} Let $S:\{0,1\}^{\mathbb{Z}}\times I\circlearrowleft$ be the random system described above,
where the maps $T_\omega$ are the parabolic maps $T_{\alpha_0}$ and $T_{\alpha_1}$. 
Assume $0<\alpha_0<\alpha_1<\frac13$.
Denote by $\mu$ the annealed invariant measure and by $\mu^\omega$ the 
fibred measures.
Then for all $t>0$:
\begin{eqnarray*}
\mu^\omega\!\left(y\in[0,1]: 
\tau^\omega_{B_{\rho}(\mathsf{x})}(y)>\frac{t}{\mu(B_{\rho}(\mathsf{x}))}\right)
&\longrightarrow&e^{-t}\\
\mu^\omega_{B_{\rho}(\mathsf{x})}\!\left(y\in [0,1]: 
\tau^\omega_{B_{\rho}(\mathsf{x})}(y)>\frac{t}{\mu(B_{\rho}(\mathsf{x}))}\right)
&\longrightarrow &e^{-t}
\end{eqnarray*}
as $\rho\to0$ for Lebesgue almost every $\mathsf{x}\in [0,1]$ and
 $\nu$-almost every $\omega\in\{0,1\}^{\mathbb{Z}}$.
\end{thrm}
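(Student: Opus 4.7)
The strategy is simply to verify the hypotheses (I)--(IX) of Theorem~\ref{main.theorem} in case~(A) for the random Pomeau--Manneville system, since essentially all the ingredients have been assembled in the discussion preceding the statement. Most of the verification can be read off directly: the existence of the fibred densities $h_\omega$ and their uniform upper and lower bounds follow from the invariance of the cone $\mathcal{C}_2$ (after~\cite{BB16} and~\cite{AHNTV14}), which immediately gives assumption~(VIII), and together with absolute continuity gives~(III) and~(IV) with $d_0, d_1, u_0$ arbitrarily close to $1$. The decays~(I) and~(II) follow from the cone contraction of~\cite{AHNTV14} applied to the functions $h_\omega\mu^\omega(\psi)$ and, via the remark about the smoothing operator $\mathbb{A}_\epsilon$, to $\ind_{\ball}h_\omega$ with $\|\ind_{\ball}-\mathbb{A}_\epsilon\ind_{\ball}\|_{\mathscr{L}^1}=\mathcal{O}(\epsilon)$, yielding $\lambda(n)=\mathcal{O}(n^{-p})$ for any $p<1/\alpha_1-1$. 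Absolute continuity with respect to Lebesgue also gives the annulus conditions~(VII) and~(IX) with $\xi=\beta=1$.

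The two nontrivial inputs are the contraction~(VI) and the distortion~(V). For~(VI) one estimates the worst case, namely the fully parabolic inverse branch $\psi_{\theta^{-n}\omega}^n$ at the parabolic fixed point; a standard induction on $a_n(\omega)=\psi_{\theta^{-n}\omega}^n(1/2)$ controlled by the larger exponent $\alpha_1$ yields $a_n(\omega)=\mathcal{O}(n^{-1/\alpha_1})$ uniformly in $\omega$, so $\delta(n)=\mathcal{O}(n^{-\kappa})$ with $\kappa=1/\alpha_1$. For~(V) the calculation in the text compares $DT_{\theta^n\omega}^n(a_n(\omega))$ (bounded above using $\alpha_1$) against $DT_{\theta^{n+k'}\omega}^n(\tilde a_{n+k'}(\omega))$ (bounded below using $\alpha_0$), producing the distortion rate $\Theta(n)=\mathcal{O}(n^{-\kappa'})$ with $\kappa'=\frac{1+\alpha_0}{\alpha_1}-\frac{1+\alpha_1}{\alpha_0}-\eta\frac{1+\alpha_0}{\alpha_0}$ for arbitrarily small $\eta>0$.

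With these parameters in hand I would then check the quantitative hypothesis of case~(A): since $\xi=\beta=1$ and $d_1,u_0$ can be taken arbitrarily close to $1$, the requirement $\max\{\frac{d_1\beta}{\kappa\xi-1},(\frac{\beta}{\xi}+d_1)\frac{1}{p}\}<\min\{1,u_0\}$ becomes $\frac{\alpha_1}{1-\alpha_1}<1$ and $\frac{2}{p}<1$ with $p<1/\alpha_1-1$, i.e.\ $\alpha_1<1/3$; and the condition $\gamma=\kappa u_0-2-\kappa'>1$ follows from the displayed computation in the excerpt, again using $\alpha_1<1/3$ and $u_0$ close to $1$. At this point Theorem~\ref{main.theorem} applies in the form of both the hitting and return time convergence, modulo one subtlety.

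The obstacle, and the main thing that requires care rather than being routine, is that the cylinder structure used in Proposition~\ref{main.proposition} (Section~\ref{estimate.r2}) is built from the regular expanding branches but not from the parabolic branch. This is precisely what the paragraph immediately preceding the theorem statement is addressing: one decomposes $T_\omega^{-j}\ball\cap\ball=\mathscr{P}_1\cup\mathscr{P}_2$, where $\mathscr{P}_1$ is the union over the genuine $j$-cylinders $\hat\zeta$ and is handled exactly as in Section~\ref{estimate.r2}, while $\mathscr{P}_2$ collects the parts of preimages that lie in the shrinking neighbourhood $D_{n^\eta}=(0,a'_{n^\eta-1}]$ of the parabolic fixed point. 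Because $a'_k\sim k^{-1/\alpha_0}\to0$, for any fixed $\mathsf{x}\in(0,1)$ and all sufficiently small $\rho$ one has $\ball(\mathsf{x})\cap D_{n^\eta}=\varnothing$ once $n$ (equivalently $j$) is large enough, so $\mathscr{P}_2$ is empty; this is enough to complete the estimate of $\mathcal{R}_2$ for $\mu^\omega$-a.e.\ $\mathsf{x}$. With this adjustment the conclusion of Theorem~\ref{main.theorem} transfers verbatim and yields the two displayed exponential limits for Lebesgue-a.e.\ $\mathsf{x}\in[0,1]$ and $\nu$-a.e.\ $\omega\in\{0,1\}^{\mathbb{Z}}$.
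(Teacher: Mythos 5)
Your proposal is correct and follows essentially the same route as the paper: it assembles the verifications of Assumptions~(I)--(IX) from the preceding discussion (density bounds from the cone $\mathcal{C}_2$, decay rate $\lambda(n)=\mathcal{O}(n^{-p})$ from~\cite{AHNTV14}, $\kappa=1/\alpha_1$, the distortion exponent $\kappa'$, and the annulus conditions with $\xi=\beta=1$), then checks the case~(A) inequalities with $d_1,u_0\to1$ and $p$ close to $1/\alpha_1-1$, which reduces to $\alpha_1<\tfrac13$, and finally addresses the parabolic-branch issue exactly as the paper does via the $\mathscr{P}_1\cup\mathscr{P}_2$ decomposition (with $\mathscr{P}_2$ empty for small enough $\rho$ once $\mathsf{x}\in(0,1)$ is fixed). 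The paper's proof proper is just the two-line quantitative check; the rest is the surrounding discussion, which you reproduce faithfully.
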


\begin{proof}
We verify the conditions of the theorem. Above we verified Assumptions~(I)--(VI).
Otherwise, as $\xi=1$ and $\alpha_1<\frac13$ one clearly has $\kappa\xi>1$. Also, 
since $u_0$ and $d_1$ can be chosen arbitrarily close to one, $\beta=1$ and choosing $p=\frac1{\alpha'_1}-1$ with $\frac13>\alpha'_1>\alpha_1$
arbitrarily close to $\alpha_1$ we get
$\max\left(\frac{d_1\beta}{\kappa\xi-1},(\frac\beta\xi+d_1)\frac1p\right)\leq\max(\frac{d_1}{2},(1+d_1)\frac{\alpha'_1}{1-\alpha'_1})<\min(1,u_0)$.

\end{proof}

\subsection{Random perturbation of non-uniformly expanding maps with critical set}
Li and Vilarinho constructed in~\cite{LV} a random Gibbs-Markov-Young structure for non-uniformly expanding maps with critical set. Examples of such maps include deformation of a uniformly expanding map by isotopy, and the Viana map in dimension 2. To be more precise, they consider a $C^2$ local diffeomorphism $T$ on a compact Riemannian manifold $M$ with possibly a set $\mathcal{C}\subset M$ consisting of critical points of $T$ and $\partial M$. Then they take a skew product on $\Omega \times M$ where $\Omega = \{1,2,\ldots, k\}^\mathbb{Z}$, so that $T_\omega$ only depends on the first symbol, and $T_\omega = T$ for some $\omega \in \Omega$.

In order to obtain the expanding property for almost every realization $\omega$, they require that the system satisfies the following properties:

\begin{enumerate}
\item there is $\alpha>0$ such that for $\nu \times m$ almost every $(\omega, x)$,
$$
\limsup_{n \to +\infty} \frac1n \sum_{j=0}^{n-1}\log \|DT_{\theta^j\omega}(T^j_\omega(x))^{-1}\|<-\alpha,
$$
where $m$ is the Lebesgue measure on $M$.
\item given any $\gamma>0$ there is $\delta>0$, such that for $\nu \times m$ almost every $(\omega, x)$,
$$
\limsup_{n \to +\infty} -\frac1n \sum_{j=0}^{n-1}\log \dist_\delta(T^j_\omega(x),\mathcal{C})<\gamma;
$$
here $\dist_\delta$ is the truncated distance: $\dist_\delta(x,y) = d(x,y)$ if $d(x,y)<\delta$, and $\dist_\delta(x,y) =1$ otherwise.
\end{enumerate}

Next, define the expansion time function:
$$
\mathcal{E}_\omega(x) = \min \left\{N\ge 1:  \frac1n \sum_{j=0}^{n-1}\log \|DT_{\theta^j\omega}(T^j_\omega(x))^{-1}\|<-\alpha, \mbox{ for all } n \ge N\right\},
$$
and the recurrence time function (to $\mathcal{C}$):
$$
\mathcal{R}_\omega(x) = \min \left\{N\ge 1:  \frac1n \sum_{j=0}^{n-1}\log \dist_\delta(T^j_\omega(x),\mathcal{C})<\gamma, \mbox{ for all } n \ge N\right\}.
$$
In the case when $\mathcal{C}=\emptyset$, condition (2) and $\mathcal{R}_\omega$ can be disregarded. 

Consider the tail set at time $n$ which is defined as 
$$
\Gamma^n_\omega  = \left\{x: \mathcal{E}_\omega(x)>0 \mbox{ or } \mathcal{R}_\omega(x)>n \right\}.
$$
This is the set of point for the realization $\omega$, such that the orbit does not exhibit enough hyperbolicity at time $n$, or get too close to the critical set $\mathcal{C}$. Then they prove the existence of an absolutely continuous probability measure, and quenched decay of correlations:

\begin{thrm}\cite[Theorem A]{LV}\label{LV_decay}
Assume that there is $C,\gamma>0$ and $0<v\le 1$ such that $m(\Gamma^n_\omega)<C e^{-\gamma n^v}$ for $\nu$ almost every $\omega \in \Omega$. Then for some integer $q \ge 1$ we have:
\begin{enumerate}
\item for $\nu$ almost every $\omega$ there is an absolutely continuous probability $\mu_\omega$ such that $(T^q_\omega)_*\mu_\omega = \mu_{\theta^q\omega}$; 
\item there is $C_1, \gamma_1>0$, such that for $\nu$ almost every $\omega$, we have stretched exponential decay of correlation  for  Lipschitz function and $L^\infty$ functions: $$
\left|\int_MG(H\circ T_\omega^k)\,d\mu^\omega
-\mu^\omega(G)\mu^{\theta^k\omega}(H)\right|
\le C_1e^{-\gamma_1n^{v/2}}\|G\|_{Lip}\|H\|_\infty, 
$$
for every $H\in L^\infty(M,\mathbb{R})$ and for every $G\in Lip(M,\mathbb{R})$.
\end{enumerate}
\end{thrm}

This verifies condition (II) for Theorem~\ref{main.theorem}. Condition (I) follows by integrating over $\omega$. Since $\mu^\omega$ are absolutely continuous with respect to the Lebesgue measure for almost every $\omega$, condition (V) and (VI) hold with $d_0 $ and $d_1$ both close to $\dim M$, and $\xi = \beta = 1$. We have the following theorem:

\begin{thrm}
Assume that the assumptions of Theorem~\ref{LV_decay} and  condition (III) and (IV) in Theorem~\ref{main.theorem} hold. Then
\begin{eqnarray*}
\mu^\omega\!\left(y\in M: 
\tau^\omega_{B_{\rho}(\mathsf{x})}(y)>\frac{t}{\mu(B_{\rho}(\mathsf{x}))}\right)
&\longrightarrow&e^{-t}\\
\mu^\omega_{B_{\rho}(\mathsf{x})}\!\left(y\in M: 
\tau^\omega_{B_{\rho}(\mathsf{x})}(y)>\frac{t}{\mu(B_{\rho}(\mathsf{x}))}\right)
&\longrightarrow &e^{-t}
\end{eqnarray*}
as $\rho\to0$ for Lebesgue almost every $\mathsf{x}\in M$ and
 $\nu$-almost every $\omega$.
\end{thrm}

\subsection{Random perturbation of partially hyperbolic attractors.}
In~\cite{ABR} the authors consider the small random perturbation of certain partially hyperbolic attractors. To be more precise, they consider a neighborhood $\mathcal F$ of a $C^{1+\alpha}$ diffeomorphism $f\in \mbox{Diff}^{1+\alpha}(M)$ on a compact Riemannian manifold $M$ with dimension at least two. Given any Borel probability measure $\beta$ whose support is a compact subset $B\subset \mathcal F$, they consider the shift space $\Omega = B^{\mathbb Z}$ with invariant probability $\nu = \beta^{\mathbb Z}$. Then the skew product $(\omega, x)\to (\theta\omega, f_\omega(x))$ is a random dynamical system where the map $f_\omega$ only depends on the first symbol of $\omega$.

Then it is assumed that there exists a {\em hyperbolic product structure $\Lambda_\omega$}, consisting of continuous families of stable and unstable leaves (all of which depend on $\omega$), such that 
\begin{itemize}
	\item $\Lambda_\omega$ has positive Lebesgue volume on each unstable leaf;
	\item there is a measurable partition $\{\Lambda_{i,\omega}\}$ of $\Lambda_\omega$ into $u$-subsets and a return time function $R_{i,\omega}$, constant on each $\Lambda_{i,\omega}$, such that the return map is Markov in the sense that 
	$$
	f_\omega^{R_{i,\omega}}(\gamma^u_{_\omega}(x)) \supset \gamma^u_{\theta^{R_{i,\omega}}\omega}(f_\omega^{R_{i,\omega}}(x)),
	$$
	and a similar relation holds for the stable leaves.
	\item $f^n_\omega$ contracts the stable leaves exponentially fast;
	\item on the unstable leaves, the return map $f_\omega^{-R_\omega}$ contracts exponentially fast, with bounded distortion that is independent of $\omega$;
	\item the stable holonomy is absolute continuous, with a $\log$-H\"older density whose H\"older constant is independent of $\omega$.  
\end{itemize} 
Such conditions are similar to Assumption (III) and (IV) in Theorem~\ref{main.theorem}, and can be seen as the random version of the invertible Young's tower in~\cite{LSY98}.  Under the assumption that the tail of the return time is uniformly summable:
$$
\sum_{n\ge 0} \mbox{Leb}_{\gamma^u_{\theta^{-n}\omega}}\left\{R_{\theta^{-n}\omega}>n\right\}\le C
$$
for some constant $C$ that is independent of $\omega$, it is proven that the system admits a family of physical measures $\{\mu^\omega\}$ with $(f_\omega)_*\mu^\omega = \mu^{\theta\omega}$. More importantly, $\mu^\omega$ have absolutely continuous conditional measures along the unstable leaves, whose density w.r.t. the leaf volume is uniformly bounded above and below. Furthermore, if one assumes that the tail of the return time, $\{R_{\theta^{-n}\omega}>n\}$, is exponential/stretch exponential/polynomially small, then the system has quenched and annealed decay of correlations for H\"older continuous functions at corresponding rates.

As a specific example, we take $f\in\mbox{Diff}^{1+\alpha}(M)$ where $M$ is a compact Riemannian manifold, such that $f$ has a topologically mixing uniformly hyperbolic attractor $K\in M$. Assume that $\{f_\omega\}_{\omega\in\Omega}$ is a small random perturbation of $f$. Then it is proven in~\cite[Theorem 1.6]{ABR} that the system has quenched decay of correlations for H\"older functions $G,H$ at exponential speed, with constant $C_{G,H}$ independent of $\omega$. Furthermore, it can be seen from the proof that $H$ is allowed to be $L^\infty$ on local stable leaves.\footnote{For deterministic Young towers, it is a well known fact that the decay of correlations holds for H\"older functions against $L^\infty$ functions that are constant on local stable leaves; the proof of~\cite{ABR} follows the argument of~\cite{LSY98}, and the same conclusion holds.} 

Due to the uniform exponential contracting/expanding on stable and unstable leaves, Assumption (III) and (IV) naturally holds with $\Theta(n)$ being a constant, and $\delta(n)$ decaying exponentially fast. Assumption (V) holds since the density of the conditional measures of $\mu^\omega$ are equivalent to the leaf volume with uniformly bounded density, and  $\gamma^u_\omega$ depends continuously on $\omega$. In the mean time, it is known that Assumption (VI) holds for the physical measure of (the unperturbed map) $f$, see for example~\cite{HW14}. Note that the proof in \cite{HW14} only uses the fact that the conditional measures are absolutely continuous w.r.t. the Lebesgue measure; as a result, a similar proof shows that (VI) holds for the physical measures $\{\mu^\omega\}$.

Therefore we obtain the following theorem:
\begin{thrm}
	Let $\{f_\omega\}$ be a family of same $C^1$ perturbations of $f\in\mbox{Diff}^{\,1+\alpha}(M)$ which has a topologically mixing, uniformly hyperbolic attractor $K\subset M$. Then the random system $S(\omega, x) = (\theta\omega, f_\omega (x))$ and the physical measures $\{\mu^\omega\}$ satisfy $$
	\mu^\omega\!\left(y\in M: 
	\tau^\omega_{B_{\rho}(\mathsf{x})}(y)>\frac{t}{\mu(B_{\rho}(\mathsf{x}))}\right)
	\longrightarrow e^{-t}\qquad \mbox{ as }\rho\to0
	$$
	and
	$$
	\mu^\omega_{B_{\rho}(\mathsf{x})}\!\left(y\in M: 
	\tau^\omega_{B_{\rho}(\mathsf{x})}(y)>\frac{t}{\mu(B_{\rho}(\mathsf{x}))}\right)
	\longrightarrow e^{-t}\qquad \mbox{ as }\rho\to0
	$$
	for all $t>0$ for $\mu^\omega$-almost every $\mathsf{x}\in M$ and $\nu$-almost every $\omega\in\Omega$.
\end{thrm}



\end{document}